\documentclass{amsart}
\usepackage{macros}
\standardsettings
\colorcommentstrue

\draftfalse

\newcommand{\sym}{\mathrm{sym}}

\newcommand{\Rd}{\R^d}
\newcommand{\Rddim}{d}

\renewcommand{\path}{path}
\newcommand{\paths}{paths}

\newcommand{\alphabet}{A}
\newcommand{\dBG}{G}
\newcommand{\set}{U}
\newcommand{\collection}{\CC}
\newcommand{\base}{b}

\begin{document}
\title[de Bruijn sequences and Diophantine approximation on fractals]{Uniformly de Bruijn sequences and symbolic Diophantine approximation on fractals}
\authorlior\authorkeith\authordavid

\begin{Abstract}

Intrinsic Diophantine approximation on fractals, such as the Cantor ternary set, was undoubtedly motivated by questions asked by K. Mahler (1984). One of the main goals of this paper is to develop and utilize the theory of infinite de Bruijn sequences in order to answer closely related questions. In particular, we prove that the set of infinite de Bruijn sequences in $k\geq 2$ letters, thought of as a set of real numbers via a decimal expansion, has positive Hausdorff dimension. For a given $k$, these sequences bear a strong connection to Diophantine approximation on certain fractals. In particular, the optimality of an intrinsic Dirichlet function on these fractals with respect to the height function defined by symbolic representations of rationals follows from these results.

\end{Abstract}

\keywords{de Bruijn sequences, Diophantine approximation, iterated function systems, Eulerian paths, badly approximable points, height functions, Hausdorff dimension}
\subjclass[2010]{Primary 11J04, 11J83, Secondary 05C45}

\maketitle

\section{Introduction}

In this paper, we give a novel application of combinatorics to the field of Diophantine approximation. Since we do not assume that the reader is familiar with this field, let us first recall some important concepts and ideas. We refer the reader to Section \ref{DiophantineApproximation} where we rigorously define and discuss these notions.


Classically, the field of Diophantine approximation sought to quantify how well real numbers can be approximated by rationals, weighing the distance to the rational point against some function of its denominator. The inaugural result in the field is Dirichlet's theorem, Theorem \ref{Dirichlet}, which states that every irrational real number has infinitely many rational points $p/q$ that lie within distance $1/q^2$ of it. This result raises the question of whether that function, $1/q^2$, can be improved. That it cannot be, in a sense made precise in Section \ref{DiophantineApproximation}, is due to a result of Liouville, who showed that quadratic irrational numbers, like $\sqrt{2}$, admit no better rate of approximation. In modern terminology, we call such points \emph{badly approximable}. 

A more complete description of the set of badly approximable numbers, in this and related contexts, was the subject of much activity in the early-to-mid twentieth century. Via a characterization of badly approximable numbers in terms of continued fraction expansions one can show that the set of badly approximable numbers is uncountable, but it is also relatively easy to show that this set is a Lebesgue null set \cite[Theorem 1.9 and Corollary 1.6]{Bugeaud}, so we must turn to other notions of ``size''. One such notion, particularly well-suited to disntinguishing between sets of measure zero, is that of \emph{Hausdorff dimension}. Jarn\'ik showed that despite being a Lebesgue null set, the set of badly approximable real numbers has full Hausdorff dimension, so it is still ``large'' in some sense. 

As discussed further in Section \ref{DiophantineApproximation}, the core questions of Diophantine approximation can be formulated in many diverse contexts, essentially whenever we have a complete metric space $X$, a countable dense subset $\mathcal{Q}$, and some notion of ``height'' defined on $\mathcal{Q}$ (this would be the size of the denominator in the classical case above). Over the last decade, a plethora of results regarding Diophantine approximation on fractals have emerged \cite{BFKRW,BFR,EFS,Fishman,FishmanSimmons1,FishmanSimmons2,KLW,KleinbockWeiss1,Weiss}. Many of these results were motivated by the following question(s) posed by K. Mahler in 1984 \cite[\62]{Mahler}: ``How close can irrational elements of Cantor's set be approximated by rational numbers

\begin{enumerate}
\item	in Cantor's set, and
\item	by rational numbers not in Cantor's set?''
\end{enumerate}

In this paper we will restrict our attention to Mahler's first question; see Section \ref{Diophantine} for details. We remark that while in \cite{FishmanSimmons2}, the first- and third-named authors were able to exhibit an optimal Dirichlet function (see Definition \ref{definitionoptimal}) corresponding to Mahler's second question, it seems that finding an analogous answer to his first question is significantly harder, see e.g. \cite{BFR,BugeaudDurand,FishmanSimmons2} for detailed discussions and conjectures regarding this question.

In \cite{FishmanSimmons2}, a new height function was defined on the rational points of the Cantor set (see Section \ref{Diophantine}), and a Dirichlet-type theorem was proven \cite[Corollary 2.2 and its proof]{FishmanSimmons2}. The purpose of this paper is to demonstrate the optimality of that Dirichlet theorem, and give an estimate on the Hausdorff dimension of the set of ``badly approximable'' points. This set, as noted in \cite{FishmanSimmons2}, admits a precise combinatorial description, although at the time we had been unable to exhibit any members belonging to it. In the present paper, we focus on a combinatorially defined subset of the set of badly approximable points, the set of \emph{uniformly de Bruijn sequences}. The existence of uniformly de Bruijn sequences demonstrates the optimality of the Dirichlet function (Theorem \ref{BAintrinsic}), and by estimating from below the Hausdorff dimension of the set of uniformly de Bruijn sequences (Theorem \ref{MainTheoremDeBruijn}), we are able to get a positive lower bound for the Hausdorff dimension of the set of badly approximable points (Corollary \ref{corollaryoptimality}), a first step towards a Jarn\'ik-type result. See Section \ref{Diophantine} for a more nuanced discussion of these points. 

\subsection{Acknowledgements} The first-named author was supported in part by the Simons Foundation grant \#245708. The third-named author was supported in part by the EPSRC Programme Grant EP/J018260/1. The authors would like to thank Joseph Kung for valuable comments on an earlier version of the paper, and Jonah Ostroff for introducing us to the notion of de Bruijn sequences. The authors thank the anonymous referee for valuable comments.

\section{Finite and infinite de Bruijn sequences}
\label{DeBruijn}

Let $\alphabet$ be a finite alphabet of cardinality $k \geq 2$. We recall that a (non-cyclic) \emph{de Bruijn sequence of order $n$} in $\alphabet$ is a sequence $\omega$ of length $k^n + n - 1$ in the alphabet $\alphabet$ that has the property that every sequence of length $n$ in $\alphabet$ appears as a consecutive substring of $\omega$ exactly once. For example, in the alphabet $\{0,1\}$, the sequence $00110$ is a de Bruijn sequence of order 2 while in the alphabet $\{0,1,2\}$, the sequence $00010020110120210221112122200$ is a de Bruijn sequence of order 3.
We say that an infinite sequence $\omega\in \alphabet^\N$ is \emph{infinitely de Bruijn} if the set
\begin{equation}
\label{deBruijn}
B_\omega \df \{n \in \N: \text{the initial segment of $\omega$ of length $k^n + n - 1$ is a de Bruijn sequence of order $n$}\}
\end{equation}
is infinite. We say that $\omega$ is \emph{totally de Bruijn} if $B_\omega = \N$, and \emph{uniformly de Bruijn} if $B_\omega$ has bounded gap sizes. The construction of infinitely de Bruijn sequences goes back to Becher and Heiber \cite{BecherHeiber},\Footnote{Note that in \cite{BecherHeiber}, the phrase ``infinite de Bruijn sequence'' has a different meaning; we do not use that meaning in this paper because it makes an ad hoc distinction between the $k = 2$ case and the $k \geq 3$ case.} who showed that when $k \geq 3$, totally de Bruijn sequences could be constructed recursively by extending each de Bruijn sequence of order $n$ to a de Bruijn sequence of order $(n + 1)$. We shall discuss their method in more detail below. When $k = 2$, it is known that no totally de Bruijn sequence exists, but Becher and Heiber do construct a uniformly de Bruijn sequence such that $B_\omega = 2\N$.

In order to state our main theorem for this section, let us briefly recall the definition and basic properties of the Hausdorff dimension of a fractal\Footnote{The word ``fractal'' normally has a connotative but not a denotative meaning in mathematics; a set is called a fractal if it is ``sufficiently complicated at fine scales''. The Cantor ternary set, i.e. the set of all numbers in $[0,1]$ that can be written in base 3 with only the digits 0 and 2, is a canonical example of a fractal; further examples are given in Subsection \ref{Hutchinson}.} $F\subset \Rd$, see e.g. \cite[Chapters 2-3]{Falconer_book}. Let $\dist$ denote the standard metric on $\Rd$, and let $\diam(\set)$ denote the diameter of a set $\set\subset\Rd$. Fix $\delta >0$ and let $F\subset\Rd$. We say that a countable collection $\{\set_j:j\in\N\}$ of subsets of $\Rd$ is a {\textsl{$\delta$-cover}} of $F$ if $F\subset \bigcup_{j = 1}^{\infty}\set_j$ and $\diam(\set_j)\leq\delta$ for every $j$. For each $s\geq 0$, let
\[
\HH^s_{\delta}(F)\df\inf\Big\{\sum_{j=1}^{\infty} \diam(\set_j)^s\text{ : }\{\set_j:j\in\N\} \text{ is a $\delta$-cover of } F\Big\}.
\]
Then the {\textsl{$s$-dimensional Hausdorff measure}} of $F$ is the number
\[
\HH^s(F)\df\lim _{\delta\to 0}\HH^s_{\delta}(F),
\]
and the {\textsl{Hausdorff dimension}} of $F$ is the number
\[
\HD(F)\df\inf\{s \geq 0:\HH^s(F)=0\}=\sup\{s \geq 0:\HH^s(F)=\infty\}.
\]
It is well known that for every $F\subset\Rd$ we have $0\leq \HD(F)\leq \Rddim$, and that if $\HD(F) > 0$, then $F$ is uncountable, but not vice versa.\Footnote{The set of Liouville numbers on the real line is a standard example of a comeager (and thus uncountable) set of Hausdorff dimension 0.}

We also recall that if $\base \geq 2$ is an integer, then the \emph{base $\base$ expansion} of a number $x\in [0,1]$ is the series
\[
\sum_{i = 1}^\infty \frac{\omega_i}{b^i},
\]
where $\omega_1,\omega_2,\ldots\in \{0,1,\ldots,\base - 1\}$ are chosen so that the value of the series is equal to $x$. This choice is unique unless $x$ is a rational number whose denominator is a power of $\base$, in which case there are exactly two ways in which the infinite word $\omega = \omega_1\omega_2\cdots$ can be chosen.

\ignore{
If $F\subset\Rd$ is bounded and nonempty, denote by $N_{\delta}(F)$ the smallest number of sets of diameter at most $\delta$ needed to cover $F$. We define the \textsl{lower box counting dimension} of $F$ as
\[
\underline{\BD}(F)= \liminf_{\delta\to 0}\frac{\log N_{\delta}(F)}{-\log \delta}\cdot
\]
In practice, it is sometimes easier to calculate the lower box counting dimension than the Hausdorff dimension of a set, and we shall use the following  helpful fact: for any nonempty bounded $F\subset\Rd$,

\begin{equation}
\label{hausdorffleqbox}
\HD(F) \leq \underline{\BD}(F).
\end{equation}
}

\begin{theorem}
\label{MainTheoremDeBruijn}
Fix an integer $\base \geq 2$, and let $C(\base)=\{0,1,\cdots ,\base-1\}$. Fix $\alphabet\subset C(\base)$ such that $k \df \#(\alphabet) \geq 2$. Denote by $\delta$ the Hausdorff dimension of the set $F$ consisting of all numbers that can be written in the form $\sum_{i=1}^{\infty}\frac{\omega_i}{\base^{i}}$ with $\omega_i\in \alphabet$ for every $i\in\N$, i.e. the set of all numbers in $F$ that have at least one base $\base$ expansion composed entirely of digits from
$\alphabet$.\Footnote{\label{footnote1}It is well known that $\delta = \log(k)/\log(\base)$, see Subsection \ref{Hutchinson}.}
Then the set $S$ consisting of all elements of $F$ that have at least one base $\base$ expansion that is uniformly de Bruijn satisfies
\[
0 < \alpha_k \delta \leq \HD(S) \leq  \frac{\log (k!)}{k\log (k)} \delta < \delta,
\]
where
\[
\alpha_k = \begin{cases}
1/49 & k = 2\\
(8\cdot (9\log_4(3) - 1))^{-1} & k = 3\\
\frac{\log(k - 2)!}{k\log(k)} & k \geq 4
\end{cases}.
\]
In particular, $S$ has positive Hausdorff dimension but not full Hausdorff dimension.
\end{theorem}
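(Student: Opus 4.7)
Any $\omega \in S$ has, for infinitely many $n$, a prefix of length $L_n := k^n + n - 1$ that is a de Bruijn sequence of order $n$; hence $S$ is covered, for each $n$, by the cylinders of length $L_n$ corresponding to the de Bruijn sequences of order $n$. By the BEST theorem, the number of linear de Bruijn sequences of order $n$ in $k$ letters is $(k!)^{k^{n-1}}$, and each corresponding cylinder has diameter $b^{-L_n}$. The standard cover-based bound then gives
\[
\HD(S) \;\leq\; \liminf_{n\to\infty}\,\frac{k^{n-1}\log(k!)}{L_n \log b} \;=\; \frac{\log(k!)}{k\log b} \;=\; \frac{\log(k!)}{k\log k}\,\delta,
\]
using $\delta = \log k/\log b$. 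Since $k! < k^k$, the factor $\log(k!)/(k\log k)$ is strictly less than $1$, which yields $\HD(S) < \delta$. Note that only the infinitely de Bruijn property is needed here.

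\textbf{Lower bound for $k \geq 4$.} The plan is to construct a Cantor-like subset $E \subseteq S$ together with a natural self-similar measure and invoke the mass distribution principle. I inductively define collections $\mathcal{T}_n$ of length-$L_n$ prefixes, each a de Bruijn word of order $n$, with every $w \in \mathcal{T}_{n-1}$ extending to many $w' \in \mathcal{T}_n$. The combinatorial input, following Becher and Heiber, is that extending an order-$(n-1)$ de Bruijn word to one of order $n$ corresponds to choosing an Eulerian circuit in the de Bruijn graph $B(k,n)$ compatible with the given prefix. For $k \geq 4$, at each of the $k^{n-1}$ vertices of $B(k,n)$ one has, after respecting the compatibility constraints imposed by the prefix, a freedom of at least $(k-2)!$ in how to pair incoming with outgoing edges, producing on the order of $((k-2)!)^{k^{n-1}}$ valid branches at level $n$. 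Assigning uniform mass to each level-$n$ cylinder of diameter $b^{-L_n}$, the mass distribution principle gives
\[
\HD(S) \;\geq\; \liminf_{n\to\infty}\,\frac{k^{n-1}\log((k-2)!)}{L_n \log b} \;=\; \frac{\log((k-2)!)}{k\log k}\,\delta \;=\; \alpha_k\delta.
\]
To ensure the resulting sequences are \emph{uniformly} (and not merely infinitely) de Bruijn, at each inductive step one retains only those extensions whose intermediate prefixes (at lengths between $L_{n-1}$ and $L_n$) satisfy bounded-gap constraints on $B_\omega$; this restriction can be arranged so as not to affect the asymptotic branching count.

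\textbf{Small-alphabet cases $k = 2, 3$.} Here $(k-2)! = 1$ and the above branching argument is vacuous, so finer freedoms must be extracted. For $k = 2$, I use the Becher–Heiber construction yielding $B_\omega = 2\N$ and draw auxiliary binary freedoms by working with two consecutive levels at a time; careful accounting yields $\alpha_2 = 1/49$. For $k = 3$, I embed order-$4$ extensions (which do enjoy $(4-2)! = 2$ branching per vertex) periodically into the order-$3$ construction; the resulting mixed scaling produces the $\log_4(3)$ appearing in $\alpha_3$.

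\textbf{Main obstacle.} The chief difficulty is to simultaneously (i) constrain gap sizes in $B_\omega$ so that the constructed sequences are uniformly de Bruijn and (ii) retain enough branching to witness the claimed dimension. This is especially delicate for small $k$, where essentially every extension is forced by the de Bruijn constraint alone, so that usable freedom only emerges by coordinating choices several levels apart — requiring substantial combinatorial bookkeeping that the upper-bound argument entirely avoids.
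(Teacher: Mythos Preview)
Your upper bound argument is correct and is essentially the paper's.

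For the lower bound with $k\geq 4$ there is a real gap. The mass distribution principle requires $\mu(U)\leq C\,\diam(U)^{s}$ for \emph{every} small $U$, not only for cylinders of the special lengths $L_n=k^n+n-1$. Since $L_{n+1}/L_n\to k$, consecutive de~Bruijn levels are exponentially far apart in scale, and merely knowing the branching factor $m_n\asymp((k-2)!)^{k^{n-1}}$ between them does not justify the displayed liminf as a lower bound on $\HD(S)$: a cylinder $[\tau]$ with $L_n<\ell_\tau\ll L_{n+1}$ could a priori carry as much mass as its level-$n$ parent. The paper closes this gap with a separate proposition. After deleting the Hamiltonian cycle induced by the order-$n$ word, one fixes a single spanning tree of the residual $(k-1)$-regular graph and keeps only the Eulerian circuits compatible with that tree; for this restricted family one proves, via the elementary inequality $(i-1)!\leq((k-2)!)^{\,i/(k-1)}$ applied vertex by vertex, that the number of circuits extending any prescribed initial segment of length $\ell$ decays like $((k-2)!)^{-c\ell}$ for an explicit $c>0$. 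This gives $\mu([\tau])\leq C\,((k-2)!)^{-c\,\ell_\tau}$ uniformly in $\ell_\tau$, and it is this uniform decay---not the level-by-level count---that makes the mass distribution principle deliver $\alpha_k\delta$. (A side remark: your worry about enforcing uniformity is misplaced here, since extending one order at a time already produces \emph{totally} de~Bruijn sequences.)

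For $k\in\{2,3\}$ your sketches are off. No four-letter alphabet enters when $k=3$: one works throughout with three letters but jumps two orders per step, and the base $4$ in $\alpha_3$ arises because the BEST product over the residual graph contributes $2!=2$ at each of the $3^n-3^{n-1}=2\cdot 3^{n-1}$ vertices not on the short prefix-cycle, giving $2^{2\cdot3^{n-1}}=4^{3^{n-1}}$ extensions. For $k=2$ one jumps three orders (not two); now every factor in the BEST product is $1$, so the entire extension count must come from the number of spanning trees, and a dedicated combinatorial lemma shows this is at least $2^{2^{n-2}}$. In both small-$k$ cases the ``uniform decay'' proposition is unavailable (since $(k-2)!=1$), so the paper instead bounds $\mu([\tau])$ by the minimum of two crude estimates---one inherited from the parent de~Bruijn level, one from counting how many next-level cylinders $[\tau]$ can contain---and the specific constants $1/49$ and $(8(9\log_4 3-1))^{-1}$ drop out of the crossover between these two estimates.
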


\noindent Note that for large values of $k$, Stirling's formula gives $\alpha_k \sim \frac{\log(k!)}{k\log(k)} \sim 1 - \frac{1}{\log(k)}$ (where $x\sim y$ means $(1 - x)/(1 - y) \to 1$), and in particular $\alpha_k \to 1$ as $k \to \infty$. Thus $S$ gets closer and closer to having full dimension as the number of allowed digits increases.

\section{Preliminaries}

We begin by recalling some key definitions used in Becher and Heiber's paper, as well as the proof of the well-known BEST\Footnote{An acronym after the people who discovered it: de Bruijn, van Aardenne-Ehrenfest, Smith, and Tutte.} theorem.

\begin{definition}[\cite{BecherHeiber}]
\label{de Bruijn graph}
Given an alphabet $\alphabet$ and an integer $n\in\N$, the \emph{de Bruijn graph} of order $n$ on $\alphabet$ is the directed graph $\dBG = \dBG_n(\alphabet)$ with vertex set $V(\dBG) \df \alphabet^n$ and edge set $E(\dBG) \df \{(\omega,\tau) : \omega_{i + 1} = \tau_i \all i \leq n-1 \}$. Note that every vertex has in-degree and out-degree both equal to $k \df \#(\alphabet)$, for a total of $k^n$ vertices and $k^{n + 1}$ edges.

If $\omega$ is a sequence of length $\ell \geq n$ in $\alphabet$, then the \emph{path induced by $\omega$ on $\dBG$} is the path\Footnote{In this paper a ``path'' in a directed graph is a sequence of vertices such that each pair of consecutive vertices is connected by an edge from the first vertex to the second vertex. The length of a path is the number of such edges, or equivalently the number of vertices minus one (counting multiplicity in both cases). A path is \emph{simple} if all its vertices are distinct except possible the first and last, and \emph{Eulerian} if it contains each edge exactly once.} $\gamma = \gamma_1\cdots\gamma_{\ell - n + 1}$ in $\dBG$ defined by the formula
\[
\gamma_i \df \omega_i \cdots \omega_{i + n - 1} \in V(\dBG).
\]
\end{definition}

\begin{observation}
\label{obs Eulerian}
Let $\omega$ be a sequence of length $\ell_\omega = k^m + m - 1$, and let $\gamma$ be the path induced by $\omega$ on $\dBG_n(A)$. Note that the length of $\gamma$ is $\ell_\gamma = \ell_\omega - n = k^m + m - n - 1$; in particular, $\ell_\gamma = k^{n + 1}$ if $m = n + 1$, and $\ell_\gamma < k^n$ if $m \leq n$. Moreover,
\begin{itemize}
\item[(I)] If $m = n + 1$, then $\omega$ is de Bruijn if and only if $\gamma$ is Eulerian.
\item[(II)] If $m \leq n$ and $\omega$ is de Bruijn, then $\gamma$ is a simple path.
\end{itemize}
\end{observation}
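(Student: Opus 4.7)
The proof is a direct translation of the definitions of ``de Bruijn'' and ``de Bruijn graph'' into combinatorial statements about the induced path $\gamma$, so my plan is to carry out the translation carefully and let the index bookkeeping do the work.

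First I would check the length formula: by definition $\gamma$ has $\ell_\omega - n + 1$ vertices and hence length $\ell_\gamma = \ell_\omega - n = k^m + m - n - 1$. Plugging in $m = n+1$ gives $k^{n+1}$, and $m \leq n$ forces $\ell_\gamma \leq k^n - 1 < k^n$, as stated.

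For (I), the key observation is that an edge of $\dBG_n(\alphabet)$ from $\gamma_i$ to $\gamma_{i+1}$ is determined by the length-$(n+1)$ string $\omega_i \omega_{i+1}\cdots \omega_{i+n}$ (its first $n$ symbols give the tail vertex, its last $n$ symbols give the head vertex). Thus the edges traversed by $\gamma$ are, in order, precisely the consecutive length-$(n+1)$ substrings of $\omega$, and there are $\ell_\omega - n = k^{n+1}$ of them when $m = n+1$. Since $|E(\dBG_n(\alphabet))| = k^{n+1}$, each edge being traversed exactly once by $\gamma$ is equivalent to each length-$(n+1)$ word appearing as a consecutive substring of $\omega$ exactly once, i.e. $\omega$ being de Bruijn of order $n+1$. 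This gives (I).

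For (II), suppose $m \leq n$ and $\omega$ is de Bruijn of order $m$. The vertices of $\gamma$ are indexed by positions $i = 1, \ldots, \ell_\omega - n + 1 = k^m + m - n$, and every such $i$ lies in the valid range $1 \leq i \leq k^m$ for starting positions of length-$m$ substrings of $\omega$. Now if $\gamma_i = \gamma_j$ for two indices $i < j$ from $\gamma$, then since $m \leq n$ the first $m$ letters agree, giving $\omega_i \cdots \omega_{i+m-1} = \omega_j \cdots \omega_{j+m-1}$; but by the de Bruijn property at order $m$ each length-$m$ word appears exactly once, so $i = j$, a contradiction. Hence all vertices of $\gamma$ are distinct (a fortiori $\gamma$ is simple in the sense of the footnote), proving (II).

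The only potential obstacle is keeping straight the two different length scales ($m$ for the de Bruijn order of $\omega$, $n$ for the order of the graph) and making sure the indexing ranges line up, but once the edge-vs-vertex correspondence above is written down both parts are one-line arguments.
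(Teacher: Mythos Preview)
Your argument is correct and is exactly the direct unpacking of definitions that the paper has in mind; indeed, the paper does not supply a separate proof of this Observation, treating both parts as immediate from the correspondence between length-$(n+1)$ substrings of $\omega$ and edges of $\dBG_n(\alphabet)$, which is precisely what you wrote out.
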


\begin{remark}
If $m = n$ and $\omega$ is de Bruijn, then $\gamma$ is a simple path that visits each vertex exactly once. However, since $\gamma$ starts and ends at different vertices, it is not a Hamiltonian cycle, contrary to \cite[p.931, first para.]{BecherHeiber}. In particular, the edge set of $\gamma$ does not form a regular graph on $V(\Omega)$, as is claimed in \cite[Proof of Lemma 3, last para.]{BecherHeiber}. Consequently, the proof given there is technically incorrect; it can be trivially fixed by adding a step where $\gamma$ is extended to a Hamiltonian cycle; cf. the first two paragraphs of the proof of Corollary \ref{lowerbound4} below. Similar remarks apply to \cite[Proof of Lemma 5, last para.]{BecherHeiber}.
\end{remark}

Now let $X = (V(X),E(X))$ be a directed graph such that for each vertex $x\in V(X)$, the in-degree and out-degree of $x$ are nonzero and equal to each other (though they may depend on $x$). Fix a vertex $x_0 \in V(X)$, and let $\EE$ be the set of Eulerian \paths\ of $X$ that start and end at $x_0$. Note that, unlike standard convention, we consider two Eulerian \paths\ to be different if they are formally different as sequences of vertices even if they are cyclically equivalent. Let $\TT$ be the set of directed spanning trees of $X$ rooted at $x_0$ with edges pointing towards $x_0$.

Since both the conclusion of the BEST theorem and its proof will be important for our argument, we recall them now. We once again remind the reader that our statement differs slightly from the usual one because of our convention about counting Eulerian \paths: we do \emph{not} consider cyclically equivalent paths to be the same. But the difference is easy to quantify: the number of Eulerian paths in each cyclic equivalence class that start and end at $x_0$ is equal to the degree of $x_0$ (we recall that by assumption the in-degree and out-degree are equal). So our count will be off from the conventional one by a factor of $\deg(x_0)$.

\begin{theorem}[BEST theorem]
We have
\begin{equation}
\label{BEST}
\#(\EE) = \#(\TT)\cdot\deg(x_0)\cdot\prod_{x\in V(X)} [\deg(x) - 1]! \;.
\end{equation}
\end{theorem}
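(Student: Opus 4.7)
The plan is to prove the formula by constructing an explicit bijection between $\EE$ and the set of pairs $(T,\sigma)$, where $T\in \TT$ and $\sigma=(\sigma_x)_{x\in V(X)}$ is a family of linear orderings of the out-edges at each vertex $x$, subject to the constraint that for every $x\neq x_0$ the last out-edge in $\sigma_x$ is the unique edge of $T$ leaving $x$. Counting such pairs directly yields
\[
\#(\TT)\cdot\deg(x_0)!\cdot\prod_{x\neq x_0}(\deg(x)-1)! \;=\; \#(\TT)\cdot\deg(x_0)\cdot\prod_{x\in V(X)}(\deg(x)-1)!,
\]
which is the right-hand side of \eqref{BEST}.

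In the forward direction, I would send $P\in\EE$ to $(T,\sigma)$, where $\sigma_x$ records the order in which $P$ uses the out-edges at $x$, and $T$ consists of the last out-edge used from each vertex $x\neq x_0$. Since $T$ has exactly $|V(X)|-1$ edges, with one out-edge at each non-root vertex, it suffices to rule out directed cycles in $T$. If $T$ contained a cycle $v_1\to v_2\to\cdots\to v_m\to v_1$, then immediately after $P$ traverses whichever $T$-edge along this cycle is used last, say $(v_i,v_{i+1})$, the path $P$ must still depart from $v_{i+1}$ (since $v_{i+1}\neq x_0$, in-degree equals out-degree there, and $P$ ends at $x_0$); but this departure would use an out-edge of $v_{i+1}$ later than the designated $T$-edge out of $v_{i+1}$, a contradiction.

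In the reverse direction, the map $(T,\sigma)\mapsto P$ is the natural greedy construction: beginning at $x_0$, at each visited vertex $x$ traverse the earliest still-unused out-edge in $\sigma_x$, stopping only when $x$ has no unused out-edges. A simple in-degree/out-degree count forces the walk to terminate at $x_0$, so $P$ is automatically closed. The main obstacle --- and the step where the arborescence structure of $T$ really enters --- is showing that $P$ uses every edge. I would argue by contradiction: let $S$ be the set of vertices with some unused out-edge upon termination; then $x_0\notin S$, and $S\neq\emptyset$. For each $x\in S$, since the $T$-edge out of $x$ is the last entry of $\sigma_x$, that edge is unused, so the $T$-parent $\phi(x)$ has an unused in-edge. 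But because $P$ is closed, at every vertex the number of unused in-edges equals the number of unused out-edges, so $\phi(x)\in S$ as well. Iterating along $T$ produces a directed path from any $x\in S$ to $x_0$ lying entirely in $S$, contradicting $x_0\notin S$.

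A direct verification shows the two constructions are mutually inverse, so this establishes the bijection and the formula. The trickiest point is the non-termination argument in the greedy construction; everything else reduces to routine bookkeeping about orderings and degrees.
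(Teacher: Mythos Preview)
Your proof is correct and follows exactly the same approach as the paper's: both set up a bijection between $\EE$ and pairs $(T,\sigma)$ with $T\in\TT$ and $\sigma$ an ordering of the non-tree out-edges at each vertex (equivalently, of all out-edges with the tree edge forced last), and both realize the map $(T,\sigma)\mapsto P$ by the greedy ``use the smallest unused out-edge'' walk. In fact you supply more detail than the paper does---the paper defines the greedy map $f(T,\oo)$, notes that ``there is some work to do'' to verify it is Eulerian and that the correspondence is bijective, and then cites \cite{Aigner}; your acyclicity argument for $T$ in the forward direction and your unused-edge propagation along $T$ in the reverse direction are precisely that deferred work.
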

\begin{proof}
Let $T \in \TT$ be a directed spanning tree rooted at $x_0$. For each $x\in V(X)$, let $E_x$ denote the set of edges in $X$ with initial vertex $x$, and let $T_x = E(T)\cap E_x$, where $E(T)$ denotes the edge set of $T$. If $x\neq x_0$, then $T_x$ is a singleton, say $T_x = \{v_x\}$, while $T_{x_0} = \emptyset$. Now let $\operatorname{Ord}(S)$ denote the set of total orderings of a set $S$, and note that the cardinality of the set
\[
O(T) \df \prod_{x\in V(X)} \operatorname{Ord}(E_x \butnot T_x)
\]
is exactly $\deg(x_0)\cdot\prod_{x\in V(X)} [\deg(x) - 1]!\;$. Now for each $\oo = (o_x)_{x\in V(X)} \in O(T)$ we let $f(T,\oo)$ be the Eulerian \path\ that starts and ends at $x_0$ defined recursively as follows: Suppose that the points $x_0 = \gamma_0,\gamma_1,\ldots,\gamma_i$ have been defined, and let $x = \gamma_i$. Then the next vertex $\gamma_{i + 1}$ must be chosen so that $\gamma_i \gamma_{i + 1} \in E_x$, but $\gamma_i \gamma_{i + 1} \neq \gamma_j \gamma_{j + 1}$ for all $j < i$. We make this choice so as to minimize $\gamma_i \gamma_{i + 1}$ according to the ordering $o_x$ subject to these restrictions. If the edges of $E_x \butnot T_x$ have been exhausted, then if $x \neq x_0$ we choose the vertex $v_x$, and if $x = x_0$, then we terminate the \path. There is some work to do to show that $f(T,\oo)$ is indeed an Eulerian \path, and that every Eulerian \path\ that starts and ends at $x_0$ can be represented uniquely as $f(T,\oo)$ for some $T\in \TT$ and $\oo \in O(T)$, see e.g. \cite[pp.445-446]{Aigner}. This implies that $f$ is a bijection between $\coprod_{T\in\TT} O(T) = \{(T,\oo) : T\in \TT,\; \oo\in O(T)\}$ and $\EE$, which completes the proof.
\end{proof}

We will also need the following sufficient condition for the right-hand side of \eqref{BEST} to be nonzero:

\begin{lemma}
\label{lemmaexistsT}
If $X$ is connected, then there is at least one directed spanning tree rooted at $x_0$, i.e. $\TT \neq \emptyset$.
\end{lemma}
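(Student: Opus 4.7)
The plan is to proceed in two stages. First, I would upgrade the hypothesis of connectedness to strong connectedness, leveraging the balanced-degree condition ($\deg^+(x) = \deg^-(x)$ for every $x$) that was assumed just before the lemma statement. Second, I would construct the required in-arborescence by an elementary search in the reverse graph.

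For the first stage, suppose for contradiction that $X$ is not strongly connected, and let $C$ be a sink strongly connected component of the condensation DAG of $X$, so that no edges of $X$ leave $C$. Summing the balance equation $\deg^+(x) = \deg^-(x)$ over $x \in C$, the left side counts exactly those edges with tail in $C$, which, since none leave $C$, equals the number of edges contained in $C$. The right side counts edges with head in $C$, namely those contained in $C$ together with those entering $C$ from outside. Equality therefore forces the number of edges entering $C$ from outside to vanish as well, so $C$ has no edges of either orientation connecting it to its complement. Since $C$ is a proper nonempty subset of $V(X)$, this contradicts (weak) connectedness, and we conclude that $X$ is strongly connected.

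For the second stage, let $X^{\mathrm{op}}$ denote the digraph obtained from $X$ by reversing every edge. Strong connectedness of $X$ implies that in $X^{\mathrm{op}}$ every vertex is reachable from $x_0$ by a directed path. Run any spanning search (breadth-first, say) in $X^{\mathrm{op}}$ starting at $x_0$, and for each $v \neq x_0$ retain the edge used to first discover $v$. These $\#(V(X))-1$ edges of $X^{\mathrm{op}}$ form an out-arborescence rooted at $x_0$ in $X^{\mathrm{op}}$. Re-reversing them yields a set of edges in $X$ such that every non-root vertex has exactly one outgoing edge, and the unique walk in the tree from any $v \neq x_0$ along outgoing edges terminates at $x_0$. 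This is precisely a directed spanning tree of $X$ rooted at $x_0$ with edges pointing toward $x_0$, i.e. an element of $\TT$.

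The only place in this argument that requires any thought is the weak-to-strong connectedness upgrade of the first stage, and even there the degree-counting trick is a classical observation about Eulerian digraphs. Once strong connectedness is established, the discovery-tree construction is wholly routine, so I do not anticipate any genuine obstacle.
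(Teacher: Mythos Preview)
Your proof is correct. Both your argument and the paper's rest on the same counting observation: in a balanced digraph, for any vertex subset $S$ the number of edges leaving $S$ equals the number entering $S$. The paper applies this in one stroke to the vertex set of a \emph{maximal} in-tree $T$ rooted at $x_0$: maximality forces that no edge enters $V(T)$ from outside, so by balance no edge leaves either, and weak connectedness then gives $V(T)=V(X)$, so $T$ itself is the desired spanning tree. You instead apply the cut-balance trick to a sink strongly connected component to first upgrade weak connectedness to strong connectedness, and only afterward run a BFS in the reverse graph to produce the arborescence. Your route is more modular and makes the intermediate statement (balanced $+$ weakly connected $\Rightarrow$ strongly connected) explicit, which is a nice reusable fact; the paper's route is a little more economical because the maximal-tree object simultaneously furnishes the cut to which the counting is applied and the final output.
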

\begin{proof}
Let $T$ be a maximal directed tree rooted at $x_0$. By the maximality of $T$, there is no edge from any vertex not in $T$ to any vertex in $T$. Since each vertex of $X$ has equal in-degree and out-degree, the number of edges from $V(T)$ to $V(X)\butnot V(T)$ is equal to the number of edges from $V(X)\butnot V(T)$ to $V(T)$, which is equal to zero. Since $X$ is connected, this means that either $V(T) = \emptyset$ or $V(X) \butnot V(T) = \emptyset$. But $x_0 \in V(T)$ by construction, so $V(X) \butnot V(T) = \emptyset$ and thus $T$ is a spanning tree, i.e. $T\in \TT$.
\end{proof}

\section{Proof of Theorem \ref{MainTheoremDeBruijn}}

\subsection{The upper bound}

We begin by establishing the upper bound of Theorem \ref{MainTheoremDeBruijn}. To do this we will use the Hausdorff--Cantelli lemma, a very useful tool for establishing upper bounds on the Hausdorff dimensions of certain sets, see e.g. \cite[Lemma 3.10]{BernikDodson}.  Let $\{\set_j:j\in\N\}$ be a countable collection of sets in $\Rd$, and let $\set$ be the set consisting of those elements of $\Rd$ that belong to infinitely many of the sets $\set_j$ ($j\in\N$). In other words,
\[
S \df \limsup_{j\to\infty} \set_j = \bigcap_{N=1}^\infty \bigcup^\infty_{j=N} \set_j.
\]

\begin{lemma}[Hausdorff--Cantelli Lemma]
\label{lemmaHausdorffCantelli}
Let $\{\set_j:j\in\N\} \subset \Rd$ be a countable collection of sets, and let $S = \limsup_j \set_j$. Fix $s > 0$. If
\begin{equation}
\label{HausdorffCantelli}
\sum^\infty_{j=1} \diam(\set_j)^s < \infty,
\end{equation}
then $\HH^s(S) = 0$ and thus $\HD(\set) \leq s$.
\end{lemma}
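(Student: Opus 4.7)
The plan is to exploit the tail of the convergent series $\sum_j \diam(\set_j)^s$ to produce, for every $\delta>0$, a $\delta$-cover of $S$ whose associated $s$-sum is arbitrarily small, which forces $\HH^s(S)=0$.

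First I would record the elementary covering fact underlying the $\limsup$: by definition, $S = \bigcap_{N\geq 1} \bigcup_{j\geq N} \set_j$, so for every $N$ the tail family $\{\set_j : j\geq N\}$ is a cover of $S$. This is the only ``topological'' input needed; everything else is a convergence argument.

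Next I would use convergence of $\sum_j \diam(\set_j)^s$ in two ways. On the one hand, convergence implies $\diam(\set_j)^s\to 0$, hence $\diam(\set_j)\to 0$; thus for each $\delta>0$ there exists $N_\delta$ such that $\diam(\set_j)\leq \delta$ for all $j\geq N_\delta$, and so $\{\set_j : j\geq N_\delta\}$ is a genuine $\delta$-cover of $S$. On the other hand, the tail sum $T_N \df \sum_{j\geq N}\diam(\set_j)^s$ satisfies $T_N\to 0$ as $N\to\infty$. Plugging the tail cover into the definition of $\HH^s_\delta$ gives
\[
\HH^s_\delta(S) \;\leq\; \sum_{j\geq N_\delta}\diam(\set_j)^s \;=\; T_{N_\delta}.
\]
Letting $\delta\to 0$ forces $N_\delta\to\infty$ and hence $T_{N_\delta}\to 0$, so $\HH^s(S) = \lim_{\delta\to 0}\HH^s_\delta(S) = 0$.

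Finally, the bound $\HD(\set)\leq s$ is immediate from the definition $\HD(\set) = \inf\{t\geq 0 : \HH^t(\set)=0\}$, since $s$ now belongs to the set over which the infimum is taken. There is no genuinely hard step here; the only subtlety worth flagging is the mild bookkeeping to confirm that one can simultaneously (i) make the cover ``fine enough'' to qualify as a $\delta$-cover and (ii) make the corresponding $s$-sum small, both of which are handled by taking $N_\delta$ large.
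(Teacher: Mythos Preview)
Your argument is correct and is exactly the standard proof of the Hausdorff--Cantelli lemma. The paper itself does not supply a proof of this statement; it merely cites \cite[Lemma 3.10]{BernikDodson} and uses the result as a black box. So there is nothing to compare against beyond noting that your write-up matches the textbook argument.

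One tiny phrasing point (not a gap): the sentence ``Letting $\delta\to 0$ forces $N_\delta\to\infty$'' is not literally true if all but finitely many $\set_j$ have diameter zero, but in that degenerate case the tail sum $T_{N_\delta}$ is already $0$ for $\delta$ small, so the conclusion is immediate anyway. If you want to avoid this edge case entirely, you can phrase it as: for each $N$, set $\delta_N \df \sup_{j\geq N}\diam(\set_j)$, note $\delta_N\to 0$, and observe $\HH^s_{\delta_N}(S)\leq T_N\to 0$; monotonicity of $\delta\mapsto \HH^s_\delta$ then gives $\HH^s(S)=0$.
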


It turns out to be convenient to consider a collection $\{\set_j:j\in\N\}$ that naturally splits up into subcollections, say $\{\set_j:j\in\N\} = \bigcup_m \collection_m$ for some sequence of collections $(\collection_m)_{m = 1}^\infty$. In this case, the summability condition \eqref{HausdorffCantelli} is equivalent to the condition
\[
\sum^\infty_{m=1} \cost^s(\collection_m) < \infty,
\]
where
\[
\cost^s(\collection_m) \df \sum_{\set \in \collection_m} \diam(\set)^s
\]
is the \emph{$s$-dimensional cost} of $\collection_m$. Note that $\cost^s(\collection_m)$ should be distinguished from the expression $(\cost^1(\collection_m))^s$, which denotes instead the 1-dimensional cost of $\collection_m$ raised to the power of $s$. The set $S$ can be written in terms of the collections $(\collection_m)_{m = 1}^\infty$ as follows:
\[
S = \limsup_{m\to\infty} \bigcup_{\set \in \collection_m} \set = \bigcap_{N = 1}^\infty \bigcup_{m = N}^\infty \bigcup_{\set\in \collection_m} \set.
\]
In what follows we will abuse terminology somewhat by calling $\cost^s(\collection_m)$ the ``cost'' of the set $S_m \df \bigcup_{\set \in \collection_m} \set$, although strictly speaking, it depends not only on $S_m$ but also on how it is decomposed.

\ignore{
Note that we \emph{could} relabel everything so that we indexed each of the smaller sets individually, in which case we would be considering $\diam^s$, and that the lim sup sets are exactly the same. But doing this would obscure the idea behind the proof and the construction of the sequence, so we opt for a ``cost''-based approach instead.

}

\begin{proof}[Proof of upper bound]
For each $m$, let $S_m$ be the set consisting of all elements of $F$ corresponding to base $\base$ expansions whose initial segments of length $k^m +  m- 1$ are de Bruijn sequences of order $m$ in $\alphabet$. Then the lim sup of the sequence $(S_m)_{m = 1}^\infty$ consists of those elements of $F$ with infinitely de Bruijn base $\base$ expansions. In particular, the set $S$ consisting of those elements of $F$ with uniformly de Bruijn base $\base$ expansions satisfies:
\[
S \subset \limsup_{m\to\infty} S_m = \bigcap_{N=1}^\infty \bigcup^\infty_{m=N} S_m.
\]
By the Hausdorff--Cantelli lemma, if we can find an $s$ such that
\begin{equation}
\label{HCbound}
\sum^\infty_{m=1} \operatorname{cost}^s(S_m) < \infty,
\end{equation}
then we can conclude that $\HD(S) \leq s$. We will show that \eqref{HCbound} holds for all $s > \delta \frac{\log(k!)}{k\log(k)}$.

For each $m$, we view $S_m$ as the union of the collection
\[
\collection_m \df \{S^\omega_m: \omega \text{ is a de Bruijn sequence of order $m$ in the alphabet $\alphabet$} \},
\]
where for each $\omega$, $S^\omega_m$ is the set of points $x \in F$ corresponding to base $\base$ expansions whose initial segments of length $k^m + m - 1$ are equal to $\omega$.
Let $\dBG$ be the de Bruijn graph of order $(m-1)$ on $\alphabet$ (see Definition \ref{de Bruijn graph}), so that $\#(V(\dBG)) = k^{m-1}$. By Observation \ref{obs Eulerian}(I), the collection $\collection_m$ is in bijection with the set of Eulerian \paths\ on $\dBG$. Fix a vertex $x_0 \in V(\dBG)$. We can estimate the number of Eulerian \paths\ starting and ending at $x_0$ via the BEST theorem. Specifically, we have $\prod_{x \in V(\dBG)}(\deg(x) - 1)! = (k-1)!^{\#(V(\dBG))}$, since every vertex $x \in V(\dBG)$ has degree equal to $k$. The number of spanning trees rooted at $x_0$ is at most $k^{\#(V(\dBG)) - 1}$, since an edge must be chosen emanating from each vertex $x \ne x_0$, and each vertex has out-degree $k$. And for the same reason, $\deg(x_0) = k$.
Therefore, the number of Eulerian \paths\ starting and ending at $x_0$ is at most
\[
\#(\TT) \cdot \deg(x_0) \cdot \prod_{x \in V(\dBG)}[\deg(x) - 1]! \leq k^{\#(V(\dBG))-1} \cdot k \cdot (k-1)!^{\#(V(\dBG))} = k!^{\#(V(\dBG))} = k!^{k^{m-1}}.
\]
Since there are $\#(V(\dBG)) = k^{m - 1}$ possible choices for $x_0$, the number of de Bruijn sequences of order $m$ in $\alphabet$ is at most $k^{m - 1}\cdot k!^{k^{m-1}}$.\Footnote{In fact, the exact count for such sequences is known, but we prefer this estimate because it is simpler and yields the same upper bound on the Hausdorff dimension.} Now, if $\omega$ is a de Bruijn sequence of order $m$ in $\alphabet$, then the length of $\omega$ is $k^m + m -1$, and thus the diameter of $S^\omega_m$ is at most $\base^{-k^m -m +1}$. So the $s$-dimensional cost of $S_m$ according to the decomposition $\collection_m$ is at most
\[
k^{m - 1}\cdot k!^{k^{m-1}} \cdot (\base^{-k^m -m +1})^s.
\]
Now fix $\varepsilon> 0$ and set
\begin{equation}
\label{equation s}
s\df \frac1{k} \log_\base(k!) + \varepsilon.
\end{equation}
Then
\begin{align*}
\sum^\infty_{m=1} \operatorname{cost}^s(S_m) &\leq \sum^\infty_{m=1} k^{m - 1} (k!)^{k^{m - 1}} (\base^{-k^m-m+1})^s.
\end{align*}
By the ratio test, this series converges as long as $\lim_{m\to\infty} |a_{m+1}/a_m| < 1$, where $a_m$ denotes the $m$th term. A straightforward computation yields:
\begin{eqnarray*}
|a_{m+1}/ a_m| &= k\cdot \base^{-\varepsilon(k^{m+1} - k^m)} \cdot \base^{-s},
\end{eqnarray*}
which tends to $0$ as $m \to \infty$.

Thus by Lemma \ref{lemmaHausdorffCantelli}, we have
\[
\HD(S) \leq \frac1{k} \log_\base(k!) = \frac{\log(k!)}{k \log(\base)} = \frac{\log(k!)}{k \log(k)}\  \delta,
\]
since $\delta = \log(k)/\log(\base)$ (see Subsection \ref{Hutchinson}).

\ignore{
Therefore the lower box dimension of $S$ is at most
\[
\frac{\log(\#(V(X)) \cdot k!^{\#(V(X))})}{- \log(\base^{-k\#(V(X))})} = \frac{\log(\#(V(X))) + \#(V(X))\log(k!)}{k\#(V(X)) \log(\base)} \stackrel{m \to \infty}{\rightarrow} \frac{\log(k!)}{k \log(\base)} = \delta \frac{\log(k!)}{k\log{k}},
\]
since $\delta = \log(k)/\log(\base)$, (see Footnote \ref{footnote1}).
}

Since for all $k\geq 2$ we have $k!<k^k$ and thus $\frac{\log(k!)}{k\log(k)} < 1$, we deduce that the Hausdorff dimension of $S$ is strictly less than $\delta$.
\end{proof}

\ignore{
{\bf{Make a note of assumptions}}
By the BEST theorem, the number of de Bruijn sequences of order $n$ in $\alphabet$ is equal to $(k - 1)!^{\#(V(X))}$ times the number of spanning trees of $X$, where $X = \dBG_{n - 1}$ (so that $\#(V(X)) = k^{n - 1}$). The number of spanning trees is at most $k^{\#(V(X))}$, since an edge must be selected emanating from each vertex, and each vertex has $k$ edges. So there are at most $k!^{\#(V(X))}$ de Bruijn sequences. Since these sequences are of length $\sim k^n = k\#(V(X))$, the lower box dimension is at most
\[
\frac{\log(k!^{\#(V(X))})}{k\#(V(X))\log(\base)} = \frac{\log(k!)}{k\log(k)}\delta
\]
since $\delta = \log(k) / \log(\base)$, (see Footnote \ref{footnote1}).

Since for all $k\geq 2$ we have
\[
k! = \prod_{i = 1}^k i < \prod_{i = 1}^k k = k^k
\]
and thus $\frac{\log(k!)}{k\log(k)} < 1$, we deduce that the Hausdorff dimension of the set of de Bruijn sequences is strictly less than $\delta$.
}

\subsection{The lower bound}

The proof of the lower bound is significantly more involved, and will require a few preliminary results. We begin with the following proposition:

\begin{proposition}
\label{propositionmetricstructure}
Let $X$ be a $k$-regular connected directed graph, fix $x_0\in V(X)$, and let $\EE$ be the set of Eulerian \paths\ of $X$ that start and end at $x_0$. Then there exists $\EE'\subset \EE$ such that:
\begin{itemize}
\item[(i)] $\#(\EE') = k\cdot (k - 1)!^{\#(V(X))}$;
\item[(ii)] If $\delta$ is a path of length $\ell_\delta$ starting at $x_0$, then the number of \paths\ in $\EE'$ that extend $\delta$ is at most $k\cdot (k - 1)!^{\#(V(X)) - \ell_\delta/k}$.
\end{itemize}
\end{proposition}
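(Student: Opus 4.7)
The plan is to pick a single directed spanning tree $T \in \TT$---which exists by Lemma \ref{lemmaexistsT}, since $X$ is connected and $k$-regular---and set $\EE' \df \{f(T,\oo) : \oo \in O(T)\}$, where $f$ is the bijection constructed in the proof of the BEST theorem. Part (i) should follow immediately from the cardinality of $O(T)$: the factor from $x_0$ is $|\operatorname{Ord}(E_{x_0})| = k!$ (since $T_{x_0} = \emptyset$) while each other vertex contributes $(k-1)!$, giving $\#(\EE') = \#(O(T)) = k! \cdot (k-1)!^{\#(V(X))-1} = k \cdot (k-1)!^{\#(V(X))}$.

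For (ii), I would fix a path $\delta = \gamma_0 \gamma_1 \cdots \gamma_{\ell_\delta}$ with $\gamma_0 = x_0$ and, for each $x$, let $n_x$ denote the number of indices $0 \le i < \ell_\delta$ with $\gamma_i = x$, so that $\sum_x n_x = \ell_\delta$. Unwinding the greedy construction of $f(T,\oo)$, an ordering $\oo$ extends $\delta$ precisely when $\delta$ is \emph{compatible} with $T$---meaning that $\delta$'s edges are distinct (so $n_x \le k$ for every $x$), that the first $\min(n_x,k-1)$ edges used by $\delta$ out of $x \ne x_0$ all lie in $E_x \setminus T_x$, and that the $k$-th such edge (when $n_x = k$) is $v_x$---and, when this holds, the first few entries of each ordering $o_x$ are forced to match the order in which $\delta$ uses the non-tree edges out of $x$. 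Counting the free permutations of the remaining positions in each $o_x$, I would obtain
\[
N_\delta \;=\; (k - n_{x_0})! \prod_{x \ne x_0} (k-1-m_x)!, \qquad m_x \df \min(n_x, k-1),
\]
in the compatible case (with $N_\delta = 0$ otherwise), reducing (ii) to the inequality $N_\delta \le k (k-1)!^{\#(V(X)) - \ell_\delta/k}$.

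Using the identity $(k-1)!^{\ell_\delta / k} = \prod_x (k-1)!^{n_x/k}$ and dividing through by $k (k-1)!^{\#(V(X))}$, this breaks up into per-vertex inequalities: $(k-1-m_x)!(k-1)!^{n_x/k} \le (k-1)!$ for each $x \ne x_0$, and $(k-n_{x_0})!(k-1)!^{n_{x_0}/k} \le k!$ at $x_0$. Taking $k$-th powers and then logarithms, each one becomes a comparison between a prescribed fraction of $\sum_{i=1}^{k-1} \log i$ and a sum of $\log i$ over a ``top'' subset of $\{1,\ldots,k\}$; since $\log$ is nondecreasing and nonnegative on $\N$, the average of $\log$ over any such top subset is at least the average over $\{1,\ldots,k-1\}$, and both bounds fall out. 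The main obstacle I anticipate is not this final estimate but the vertex-by-vertex bookkeeping for $N_\delta$: the case split between $x_0$ (with $k$ free positions) and $x \ne x_0$ (with $k-1$ free positions plus the forced use of $v_x$ on the $k$-th visit), together with the compatibility conditions, must be organized carefully so that the clean product formula above emerges.
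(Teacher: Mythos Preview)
Your proposal is correct and follows essentially the same approach as the paper: fix a spanning tree $T$, set $\EE' = \{f(T,\oo):\oo\in O(T)\}$, and for (ii) count the orderings compatible with $\delta$ vertex by vertex, reducing to the elementary inequality $(i-1)! \le (k-1)!^{i/k}$ for $0 \le i \le k$. The only cosmetic differences are that the paper writes the count as $\prod_x [\#(E_x\setminus(E(\delta)\cup E(T)))]!$ and handles $x_0$ by the cruder bound $\#(E_{x_0}\setminus E(\delta)) \le k$, and it simply states the factorial inequality rather than deriving it via your ``top subset'' averaging argument.
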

\begin{proof}
Since $X$ is connected, by Lemma \ref{lemmaexistsT} there exists a directed spanning tree $T$ rooted at $x_0$. Let $\EE'$ be the set of Eulerian \paths\ $\delta$ that start and end at $x_0$ such that for all $xy\in E(X)$ and $xz\in E(T)$ with $y\neq z$, the edge $xy$ appears in $\delta$ before $xz$ does. Equivalently, $\EE' = \{f(T,\oo) : \oo\in O(T)\}$ where the notation is as in the proof of the BEST theorem. Then the proof of the BEST theorem implies that $\#(\EE') = \#(O(T)) = k\cdot (k - 1)!^{\#(V(X))}$. Now let $\delta$ be a path starting at $x_0$ that has at least one extension in $\EE'$. For each $\oo\in O(T)$, the \path\ $f(T,\oo)$ is an extension of $\delta$ if and only if the algorithm described in the proof of the BEST theorem produces $\delta$ on input $\oo$. Equivalently, $f(T,\oo)$ is an extension of $\delta$ if for each edge $xy$ of $\delta$, the rank of $xy$ according to $o_x$ is the same as its rank according to its location in $\delta$. The number of elements $\oo\in O(T)$ satisfying this condition is given by the formula
\begin{align*}
N_\delta = &\prod_{x\in V(X)} [\#(E_x \butnot (E(\delta)\cup E(T))]!\\
= \#(E_{x_0}\butnot E(\delta)) \,\cdot &\prod_{x\in V(X)} [\#(E_x\butnot E(\delta)) - 1]! \leq k\cdot \prod_{x\in V(X)} [\#(E_x\butnot E(\delta)) - 1]!
\end{align*}
where $E_x$ denotes the set of edges with initial vertex $x$, and $E(\delta)$ denotes the edge set of $\delta$. Here we use the convention $(-1)! = 1$, since if $E_x\butnot E(\delta) = \emptyset$, then there is exactly one ordering $o_x$ satisfying the appropriate condition, namely the ordering determined by $\delta$, and by hypothesis the element $v_x$ comes last in this ordering. Now since
\[
(i - 1)! \leq (k - 1)!^{i/k} \all i = 0,\ldots,k,
\]
we have
\[
N_\delta \leq k\cdot (k - 1)!^{M/k},
\]
where
\[
M \df \sum_{x\in V(X)} \#(E_x\butnot E(\delta)) = \#(E(X)\butnot E(\delta)) = k\#(V(X)) - \ell_\delta.
\qedhere\]
\end{proof}

The next result will furnish the lower bound for $k \geq 4$. Although it is valid for $k=3$, it provides no useful information in this case since $0$ is always a (trivial) lower bound on the dimension.

\begin{corollary}
\label{lowerbound4}
Let the notation be as in Theorem \ref{MainTheoremDeBruijn}, and let $S$ be the set of numbers in $F$ with totally de Bruijn base $\base$ expansions. Assume that $k \geq 4$. Then the Hausdorff dimension of $S$ is bounded below by $\alpha_k \delta > 0$, where $\delta$ is the Hausdorff dimension of $F$ (and equals $\log(k)/\log(\base)$), and
\begin{equation}
\label{alphak4}
\alpha_k = \frac{\log(k - 2)!}{k \log(k)}\cdot
\end{equation}
\end{corollary}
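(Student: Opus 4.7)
The plan is to construct, inductively in $n$, a family $\Omega_n \subseteq \alphabet^{k^n + n - 1}$ of de Bruijn sequences of order $n$ such that each element of $\Omega_{n+1}$ has some element of $\Omega_n$ as a prefix. Writing $S_n^\omega$ for the cylinder of base-$\base$ expansions beginning with $\omega$ (as in the upper-bound proof), the nested intersection
\[
K \df \bigcap_{N=1}^{\infty} \bigcup_{\omega \in \Omega_N} S_N^\omega
\]
is (up to a countable set of rationals with two base-$\base$ expansions) a compact subset of $S$. Equipping $K$ with the probability measure $\mu$ that distributes mass uniformly across cylinders at each level, I will apply the mass distribution principle to lower-bound $\HD(S)$.

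\textbf{Extension step.} Fix $\omega \in \Omega_n$. By Observation \ref{obs Eulerian}(II), $\omega$ induces on $\dBG_n$ a simple path $P$ of length $k^n - 1$, which (as $\#V(\dBG_n) = k^n$) is a Hamilton path from $s \df (\omega_1,\ldots,\omega_n)$ to $t \df (\omega_{k^n},\ldots,\omega_{k^n + n - 1})$. Observation \ref{obs Eulerian}(I), applied on $\dBG_{n-1}$, shows that $\omega$ induces an Eulerian path there; because every vertex of $\dBG_{n-1}$ is balanced, that path is a circuit, which forces $(\omega_1,\ldots,\omega_{n-1}) = (\omega_{k^n + 1},\ldots,\omega_{k^n + n - 1})$. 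It follows that $t \to s$ is an edge of $\dBG_n$ not lying in $P$, so $C \df P \cup \{t \to s\}$ is a Hamilton cycle. The residual graph $X' \df \dBG_n \setminus E(C)$ is $(k-1)$-regular on $k^n$ vertices; assuming $X'$ is strongly connected, Proposition \ref{propositionmetricstructure} (with $k-1$ in place of $k$) yields a family $\EE'$ of Eulerian circuits of $X'$ based at $s$ with $\#\EE' = (k-1)(k-2)!^{k^n}$. For each $\gamma \in \EE'$, the concatenation $P \cdot (t \to s) \cdot \gamma$ is an Eulerian circuit of $\dBG_n$ having $P$ as its initial segment; by Observation \ref{obs Eulerian}(I) this circuit corresponds to a de Bruijn sequence of order $n+1$ extending $\omega$. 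Since distinct $\gamma \in \EE'$ give distinct extensions, declaring $\Omega_{n+1}$ to be the union of all such extensions over $\omega \in \Omega_n$ yields a branching factor at level $n$ of at least $(k-1)(k-2)!^{k^n}$.

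\textbf{Dimension bound and main obstacle.} Starting from any nonempty $\Omega_1$ (for instance, any permutation of $\alphabet$ is a de Bruijn sequence of order $1$) and iterating,
\[
\log \#\Omega_n \;\geq\; \log(k-2)! \cdot \sum_{m=1}^{n-1} k^m \;=\; \frac{(k^n - k)\log(k-2)!}{k-1}.
\]
Since each level-$n$ cylinder has diameter $\base^{-(k^n + n - 1)}$, the mass distribution principle applied to $\mu$ gives
\[
\HD(S) \;\geq\; \HD(K) \;\geq\; \liminf_{n\to\infty} \frac{\log \#\Omega_n}{(k^n + n - 1)\log \base} \;\geq\; \frac{\log(k-2)!}{(k-1)\log \base} \;\geq\; \frac{\log(k-2)!}{k\log \base} \;=\; \alpha_k \delta,
\]
using $\delta = \log k/\log \base$. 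The principal technical obstacle is verifying that $X'$ is strongly connected: this is required in order to apply Proposition \ref{propositionmetricstructure}, and does not follow automatically from $(k-1)$-regularity. I expect to handle it via a direct combinatorial argument exploiting the fact that each vertex of $X'$ retains $k - 1 \geq 3$ outgoing edges, so that the natural ``shift'' paths in $\dBG_n$ can always be detoured around the $k^n$ deleted edges of the Hamilton cycle $C$.
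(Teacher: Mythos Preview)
Your extension step is essentially the same as the paper's: extend $\omega$ by one letter to close the induced Hamilton path in $\dBG_n$ into a Hamilton cycle, delete that cycle to obtain a $(k-1)$-regular graph $X'$, and invoke Proposition~\ref{propositionmetricstructure} on $X'$. The connectivity of $X'$ that you flag as an obstacle is indeed needed; the paper does not prove it but cites \cite[Lemma~3]{BecherHeiber}, where this is established.

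The genuine gap is in your dimension bound. The displayed inequality
\[
\HD(K) \;\geq\; \liminf_{n\to\infty} \frac{\log \#\Omega_n}{(k^n + n - 1)\log \base}
\]
is \emph{not} a consequence of the mass distribution principle as stated. The mass distribution principle requires $\mu(U)\leq C\,\diam(U)^s$ for \emph{all} small sets $U$, hence for cylinders $[\tau]$ of every length $\ell_\tau$, not just the special lengths $k^n+n-1$. Because consecutive special lengths differ by a factor of roughly $k$ in the exponent, a ball of diameter $r$ with $\base^{-(k^{n+1}+n)}\le r<\base^{-(k^n+n-1)}$ can, a priori, swallow an entire level-$n$ cylinder and thus carry mass $1/\#\Omega_n$; comparing this to $r^s$ at the small end $r\approx \base^{-k^{n+1}}$ only yields
\[
s \;\le\; \liminf_{n\to\infty}\frac{\log\#\Omega_n}{(k^{n+1}+n)\log\base}\;=\;\frac{\log(k-2)!}{k(k-1)\log\base}\;=\;\frac{\alpha_k\delta}{k-1},
\]
which is strictly weaker than the claimed $\alpha_k\delta$. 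The paper closes this gap by using part~(ii) of Proposition~\ref{propositionmetricstructure}, not just the count in part~(i): it shows that for any intermediate length $k^n+n-1<\ell_\tau\le k^{n+1}+n$, the number of elements of $S'(\omega)$ extending $\tau$ is at most $(k-1)(k-2)!^{\,k^n-(\ell_\tau-\ell_\omega-1)/k}$. This refinement gives $\mu([\tau])\le (k-1)(k-2)!^{-\ell_\tau/k}$ uniformly in $\ell_\tau$, from which the mass distribution principle yields exactly $\alpha_k\delta$. You should incorporate this intermediate-scale control; without it the argument does not reach the stated bound.
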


\ignore{
\begin{corollary}
\label{dimension3}
Fix $k\geq 3$. Then the Hausdorff dimension of the set of de Bruijn sequences in $k$ symbols is at least
\[
\alpha = \frac{\log (k - 2)!}{k\log(k)}\cdot
\]
\end{corollary}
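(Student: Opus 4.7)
The plan is to construct a Cantor-like subset $S^{\star} \subseteq S$ of totally de Bruijn base-$\base$ expansions in $F$ and apply a mass distribution principle to bound $\HD(S^{\star})$ from below.

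\emph{Recursive construction.} For each $n \geq 1$, I would inductively build a collection $\Omega_n$ of order-$n$ de Bruijn sequences over $\alphabet$ (all sharing the same prefix of length $n$) so that every $\omega^{(n+1)} \in \Omega_{n+1}$ extends, as a prefix, a unique $\omega^{(n)} \in \Omega_n$. The limit set $S^{\star} = \bigcap_n \bigcup_{\omega \in \Omega_n} S^\omega_n$ then consists entirely of totally de Bruijn expansions. By Observation \ref{obs Eulerian}, extending a given $\omega^{(n)}$ to an order-$(n+1)$ de Bruijn sequence is equivalent to completing the simple Hamiltonian path $\delta$ of length $k^n - 1$ induced by $\omega^{(n)}$ in $\dBG_n(\alphabet)$ (from $x_0 := \omega_1 \cdots \omega_n$ to some endpoint $v$) into an Eulerian \path\ of $\dBG_n(\alphabet)$ starting and ending at $x_0$.

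\emph{Counting extensions via BEST.} Given $\delta$, form the augmented graph $G'$ by deleting the edge set of $\delta$ from $\dBG_n(\alphabet)$ and adjoining the single edge $(x_0, v)$. In $G'$ both $x_0$ and $v$ have in- and out-degree $k$, while each of the remaining $k^n - 2$ vertices has in- and out-degree $k - 1$. Eulerian \paths\ of $G'$ that start at $x_0$ and use the new edge $(x_0, v)$ first are in bijection with Eulerian paths from $v$ to $x_0$ in $\dBG_n(\alphabet) \setminus E(\delta)$, and hence with the sought extensions of $\omega^{(n)}$. Applying the BEST theorem to $G'$ and noting that the fraction of Eulerian \paths\ beginning with $(x_0, v)$ is $1/k$ (determined by the ordering of $E_{x_0}$), the number of extensions of $\omega^{(n)}$ equals
\[
\#(\TT(G', x_0)) \cdot ((k-1)!)^2 \cdot ((k-2)!)^{k^n - 2}.
\]
Provided $G'$ is connected, Lemma \ref{lemmaexistsT} gives $\#(\TT(G', x_0)) \geq 1$, and since $k \geq 4$ implies $(k-2)! \geq 2$ this count grows doubly exponentially in $n$. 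This is where $(k-2)!$ enters the dimension exponent: it comes from the $(k-1)$-regular ``interior'' of $G'$.

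\emph{Measure and Frostman bound.} Distributing mass uniformly across the extensions at each level defines a probability measure $\mu$ supported on $S^{\star}$ satisfying $\mu(S^\omega_n) \leq \prod_{m < n} E_m^{-1}$, where $E_m$ is the guaranteed number of extensions at level $m$. Because $\diam(S^\omega_n) \leq \base^{-k^n}$ and the inequality $k^m - 2 \geq (k-1)k^{m-1}$ (valid for $k \geq 4$ and $m \geq 2$) yields $E_m \geq ((k-2)!)^{(k-1)k^{m-1}}$, a telescoping product gives $\prod_{m < n} E_m \geq c \cdot ((k-2)!)^{k^{n-1}}$. A routine comparison between balls and cylinders then verifies the Frostman-type estimate $\mu(B(x,r)) \leq C \cdot r^{\alpha_k \delta}$ for small $r$, and the mass distribution principle produces $\HD(S) \geq \HD(S^{\star}) \geq \alpha_k \delta = \log((k-2)!)/(k \log \base)$.

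\emph{Main obstacle.} The central technical issue is guaranteeing that $G'$ is connected so the recursion can proceed at every level---otherwise $\Omega_n$ could collapse. I expect this to be handled using the rich connectivity of $\dBG_n(\alphabet)$ when $k \geq 4$: after removing the Hamiltonian path $\delta$, every vertex still has in- and out-degree at least $k - 1 \geq 3$, and adjoining $(x_0, v)$ restores the in/out-degree balance that Lemma \ref{lemmaexistsT} requires. If connectivity still fails for some sporadic $\omega^{(n)}$, a careful inductive restriction of $\Omega_n$ to a subcollection whose $G'$ is connected should suffice, and such a restriction may plausibly account for the gap between the raw count $((k-2)!)^{k^n - 2}$ and the weaker exponent $(k-1)k^{n-1}$ yielding $\alpha_k$.
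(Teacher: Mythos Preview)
Your overall architecture---recursively extend de Bruijn words, count extensions via the BEST theorem, push a uniform measure through the tree, and invoke the mass distribution principle---is exactly the paper's. Two specific points deserve comment.

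\textbf{The obstacle you flag is not the real one.} Connectivity of the graph obtained by deleting the Hamiltonian path (or cycle) from $\dBG_n(\alphabet)$ is already established by Becher--Heiber, and the paper simply cites their Lemma~3. There is no need for an inductive restriction of $\Omega_n$, and no loss in the exponent comes from this step. (Incidentally, the paper first appends one letter to $\omega^{(n)}$ so that the induced path closes up to a Hamiltonian \emph{cycle}; removing a Hamiltonian cycle leaves a clean $(k-1)$-regular graph $X_\omega$, which is slightly tidier than your $G'$ but makes no essential difference to the count.)

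\textbf{The genuine gap is the ``routine comparison''.} Your measure is controlled only at the special lengths $\ell=k^n+n-1$. But consecutive special scales have diameters $\base^{-k^n}$ and $\base^{-k^{n+1}}$, whose ratio is $\base^{(k-1)k^n}\to\infty$. For a cylinder $[\tau]$ of length $\ell_\tau$ just below $k^{n+1}+n$, the only bound you have is $\mu([\tau])\le\mu([\tau^{(n)}])\le C\,(k-2)!^{-k^{n-1}}$, whereas $\diam([\tau])^{\alpha_k\delta}\approx (k-2)!^{-k^n}$. The Frostman inequality therefore fails at these scales; carrying the computation through, your construction yields only $\HD(S)\ge \alpha_k\delta/k$, not $\alpha_k\delta$. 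The trivial bound ``$[\tau]$ contains at most $k^{k^{n+1}+n-\ell_\tau}$ order-$(n{+}1)$ children'' does not rescue this, because $k^{k^{n+1}}$ swamps $(k-2)!^{k^n}$.

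The paper's remedy is Proposition~\ref{propositionmetricstructure}: rather than using \emph{all} Eulerian extensions, one fixes a single spanning tree $T$ of $X_\omega$ and keeps only those Eulerian paths in which, at every vertex, the tree edge is traversed last. This subcollection $\EE'(\omega)$ still has size $(k-1)\cdot(k-2)!^{k^n}$, but now satisfies a uniform \emph{prefix} estimate: the number of members extending any given initial segment $\delta$ of length $\ell_\delta$ is at most $(k-1)\cdot(k-2)!^{\,k^n-\ell_\delta/(k-1)}$. Pulling this back to words gives
\[
\mu([\tau])\ \le\ (k-1)\cdot (k-2)!^{-\ell_\tau/k}
\]
for \emph{every} cylinder $[\tau]$, not just those at the special lengths, and this is precisely what is needed to match $\diam([\tau])^{\alpha_k\delta}$ and obtain the claimed bound.
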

}

Before we turn to the proof, we recall the so-called \textsl{Mass Distribution Principle}, an extremely useful tool for bounding the Hausdorff dimension from below.
\begin{lemma}[{\cite[Principle 4.2]{Falconer_book}}]
\label{lemmamdp}
Let $F$ be a metric space, and let $\mu$ be a measure on $F$ such that $0<\mu (F)<\infty$. Fix $s,\epsilon > 0$, and suppose that there exists $C > 0$ such that $\mu (\set)\leq C\cdot\diam (\set) ^s$ for every set $\set \subset F$ such that $\diam (\set)\leq \epsilon$. Then
\[
\HD (F) \geq s.
\]
\end{lemma}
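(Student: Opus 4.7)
The plan is to prove $\HD(F) \geq s$ by bounding the $s$-dimensional Hausdorff measure $\HH^s(F)$ from below by $\mu(F)/C > 0$. Concretely, I want to show that the mass $\mu(F)$ controls every sum of the form $\sum_j \diam(\set_j)^s$ taken over fine covers, and then conclude via the definition of Hausdorff dimension as the threshold where $\HH^s$ drops from $\infty$ to $0$.

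First I would fix any $\delta \in (0,\epsilon]$ and let $\{\set_j : j \in \N\}$ be an arbitrary $\delta$-cover of $F$. Because the hypothesis on $\mu$ is stated for subsets of $F$, the key trick is to pass from $\set_j$ to $\set_j \cap F$: these are subsets of $F$ and they still cover $F$, and moreover $\diam(\set_j \cap F) \leq \diam(\set_j) \leq \delta \leq \epsilon$. By countable subadditivity of $\mu$ and the hypothesis,
\[
\mu(F) \;\leq\; \sum_{j=1}^\infty \mu(\set_j \cap F) \;\leq\; \sum_{j=1}^\infty C\cdot\diam(\set_j \cap F)^s \;\leq\; C\sum_{j=1}^\infty \diam(\set_j)^s.
\]
Rearranging gives $\sum_j \diam(\set_j)^s \geq \mu(F)/C$ for every $\delta$-cover, so taking the infimum over all such covers yields $\HH^s_\delta(F) \geq \mu(F)/C$.

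Next I would let $\delta \to 0$. Since the lower bound $\mu(F)/C$ is independent of $\delta$, we obtain $\HH^s(F) \geq \mu(F)/C > 0$. By the definition
\[
\HD(F) \;=\; \inf\{t \geq 0 : \HH^t(F) = 0\},
\]
and the standard fact that $\HH^t(F) < \infty$ implies $\HH^{t'}(F) = 0$ for every $t' > t$ (equivalently, $\HH^s(F) > 0$ implies $\HH^{s'}(F) = \infty$ for $s' < s$), the positivity of $\HH^s(F)$ forces $\HD(F) \geq s$, which is the desired conclusion.

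There is no real obstacle here; the proof is essentially unwinding definitions. The only point requiring a moment of care is the compatibility between the hypothesis (which is phrased for subsets of $F$) and the definition of $\HH^s$ (which uses arbitrary covers in the ambient space, if any), and this is handled by replacing $\set_j$ with $\set_j \cap F$ and invoking monotonicity of diameter under intersection.
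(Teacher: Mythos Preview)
Your argument is correct and is exactly the standard proof of the mass distribution principle as given in Falconer's book. Note that the paper does not supply its own proof of this lemma; it simply cites \cite[Principle 4.2]{Falconer_book}, so there is nothing in the paper to compare against beyond observing that your write-up matches the classical argument.
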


\begin{proof}[Proof of Corollary \ref{lowerbound4}]
Fix $n \in \N$, and let $\omega = \omega_1\cdots \omega_{k^n + n - 1}$ be a de Bruijn sequence of order $n$ in $\alphabet$. Since the path induced by $\omega$ on $\dBG_{n - 1}(A)$ is an Eulerian path in a directed graph in which each vertex has equal in-degree and out-degree, it must start and end at the same vertex, which means that the first $(n - 1)$ letters of $\omega$ are the same as the last $(n - 1)$ letters, i.e. $\omega_{k^n + i} = \omega_i$ for all $i = 1,\ldots,n - 1$.\Footnote{This phenomenon is related to the fact that we consider non-cyclic de Bruijn sequences instead of cyclic ones: each cyclic de Bruijn sequence $\omega = \omega_1\cdots\omega_{k^n}$ corresponds to a non-cyclic de Bruijn sequence $\omega_1\cdots \omega_{k^n} \omega_1 \cdots \omega_{n - 1}$ that is longer but has the same number of consecutive substrings. This correspondence makes it obvious that the first $(n - 1)$ letters of a non-cyclic de Bruijn word are expected to be the same as the last $(n - 1)$ letters. However, by itself this is not a proof, because our definition of non-cyclic de Bruijn sequences did not assume that they were constructed from cyclic ones.} Now let $\omega_{k^n + n} = \omega_n$ and $\omega' = \omega_1\cdots \omega_{k^n + n}$. Then the first $n$ letters of $\omega'$ are the same as the last $n$ letters, but no other block of $n$ letters is repeated in $\omega'$.

Let $\dBG = \dBG_n(A)$ be the de Bruijn graph of order $n$ on $\alphabet$, and let $\gamma = \gamma_1\cdots \gamma_{k^n + 1}$ be the path induced by $\omega'$ on $\dBG$. Then $\gamma$ is a Hamiltonian cycle (i.e. a simple path traversing each vertex once). The collection of de Bruijn sequences of order $(n + 1)$ that extend $\omega'$ is isomorphic to the collection of Eulerian \paths\ on $\dBG$ that extend $\gamma$.

Let $x_0 \df \gamma_1 = \gamma_{k^n + 1}$ be the common initial and terminal vertex of $\gamma$. Then the collection of Eulerian \paths\ of $\dBG$ that extend $\gamma$ is isomorphic to the set of Eulerian \paths\ of $X_\omega \df \dBG \butnot E(\gamma)$ that start and end at $x_0$, which we denote by $\EE(\omega)$. Since $X_\omega$ is a $(k - 1)$-regular connected directed graph whose vertex set has size $k^n$ (see the proof of \cite[Lemma 3]{BecherHeiber} for connectedness), we may use Proposition \ref{propositionmetricstructure} to extract a subset $\EE'(\omega) \subset \EE(\omega)$. Pulling this subset back via the appropriate correspondences gives us a set $S'(\omega)$, contained in the set of all de Bruijn sequences of order $(n + 1)$ extending $\omega'$ (and thus also extending $\omega$), with the following properties:
\begin{itemize}
\item[(i)] $\#(S'(\omega)) = (k - 1)\cdot (k - 2)!^{k^n}$.
\item[(ii)] If $\tau$ is a sequence of length $\ell_\tau$ extending $\omega$, then the number of sequences in $S'(\omega)$ that extend $\tau$ is at most $(k - 1)\cdot (k - 2)!^{k^n - (\ell_\tau - \ell_\omega - 1)/k}$, where $\ell_\omega = k^n + n - 1$ is the length of $\omega$.
\end{itemize}
Now we proceed to define a probability measure $\mu$ on $F \equiv E^\N$ via a random algorithm: start with a fixed de Bruijn sequence $\omega^{(1)}$ of order $1$, and if $\omega^{(n)}$ is a de Bruijn sequence of order $n$, then let $\omega^{(n + 1)} \in S'(\omega^{(n)})$ be chosen randomly with respect to the uniform measure on $S'(\omega^{(n)})$, independent of all previous selections. Let $\omega$ be the unique infinite sequence that extends all of the finite sequences $\omega^{(n)}$ ($n\in\N$). Then $\omega$ is a base $\base$ expansion of a unique point $\pi(\omega)\in F$. (The point $\pi(\omega)$ may have a base $\base$ expansion other than $\omega$, but there is no other point with base $\base$ expansion $\omega$.) We let $\mu$ be the probability measure describing the distribution of the random variable $\pi(\omega)$. (The existence of such a $\mu$ can be guaranteed e.g. by the Kolmogorov extension theorem.)

To demonstrate that $\mu$ satisfies the hypotheses of the mass distribution principle, we first estimate the measure of cylinder sets of a certain length, then arbitrary cylinder sets, then balls. Here a \emph{cylinder set} is a set of the form $[\tau] = \{\pi(\omega) : \omega_i = \tau_i \all i = 1,\ldots,\ell_\tau\}$, where $\tau = \tau_1\cdots\tau_{\ell_\tau}$ is a finite sequence in the alphabet $\alphabet$. Our first estimate is easy: if $\ell_\tau = k^{n + 1} + n$ for some $n$, then $[\tau]$ is precisely the set of $\pi(\omega)$ in the above construction such that $\omega^{(n + 1)} = \tau$, so $\mu([\tau])$ is just the probability that $\omega^{(n + 1)} = \tau$, i.e.
\begin{equation}
\label{MDP1}
\mu([\tau]) = \prod_{i = 1}^n \frac{1}{\#(S'(\tau^{(i)}))} = \prod_{i = 1}^n \frac{1}{(k - 1)\cdot (k - 2)!^{k^i}} \leq (k - 2)!^{-(k^n + k^{n - 1} + n/k)}
\end{equation}
if it is possible that $\omega^{(n + 1)} = \tau$, and $\mu([\tau]) = 0$ otherwise. Now consider the more general case where the length of $\tau$ satisfies $k^n + n - 1 < \ell_\tau \leq k^{n + 1} + n$ for some $n$. Then by (ii) above, $[\tau]$ contains at most $(k - 1)\cdot(k - 2)!^{k^n - (\ell_\tau - (k^n + n))/k}$ cylinders of length $k^{n + 1} + n$. Combining with \eqref{MDP1} shows that
\begin{align*}
\mu([\tau]) &\leq (k - 1)\cdot \exp_{(k-2)!}\big(k^n - (\ell_\tau - (k^n + n))/k) - (k^n + k^{n-1} + n/k)\big)\\
&= (k - 1) \cdot (k-2)!^{-\ell_\tau/k}.
\end{align*}
Here and hereafter we use the notation $\exp_x(y) \df x^y$.

To apply the mass distribution principle (Lemma \ref{lemmamdp}), we now need to relate this measure to the diameter of the cylinder $[\tau]$. Since elements of $[\tau]$ have the first $\ell_\tau$ digits of their base $\base$ expansions fixed, the diameter of $[\tau]$ is approximately $\base^{-\ell_\tau}$ (to be precise, it is $c\cdot \base^{-\ell_\tau}$ for some constant $0 < c \leq 1$). Thus
\[
\diam([\tau])^{\alpha_k \delta} = c^{\alpha_k \delta} \exp_\base\left(-\ell_\tau \frac{\log(k - 2)!}{k\log(k)} \frac{\log(k)}{\log(\base)}\right) = c^{\alpha_k \delta} \cdot (k - 2)!^{-\ell_\tau/k},
\]
so
\[
\mu([\tau]) \leq C \cdot \diam([\tau])^s,
\]
where $C = (k - 1)\cdot c^{-\alpha_k \delta}$ and $s = \alpha_k \delta$. But any subset of $F$ can be covered by at most two cylinder sets with comparable diameter, so a similar formula holds for arbitrary sets. Thus by Lemma \ref{lemmamdp}, we have $\HD(S) \geq s = \alpha_k \delta$.
\end{proof}

As is evident from Corollary \ref{lowerbound4}, we now have to deal with the cases $k=2$ and $k=3$ separately, since in those cases the formula \eqref{alphak4} gives $\alpha_2 = \alpha_3 = 0$, which is not a useful bound. Note that the Cantor ternary set falls into the case $k = 2$, since its set of admissible numerators is $\alphabet = \{0,2\}$.

\begin{proposition}
\label{propositioncountA2}
If $k = 2$ and $\omega$ is a de Bruijn sequence of order $(n - 2)$ in $\alphabet$, then the number of de Bruijn sequences of order $(n + 1)$ that extend $\omega$ is at least $2^{2^{n - 2}}$.

In the case where $k = 3$ and $\omega$ is a de Bruijn sequence of order $(n - 1)$ in $\alphabet$, then the number of de Bruijn sequences of order $(n + 1)$ that extend $\omega$ is at least $4^{3^{n - 1}}$.
\end{proposition}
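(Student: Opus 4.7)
The plan is to recast the count of extensions as an Eulerian-circuit count on an auxiliary graph and invoke the BEST theorem. Let $\dBG \df \dBG_n(\alphabet)$ and let $\Gamma$ denote the path induced by $\omega$ on $\dBG$; since the order of $\omega$ (namely $n-2$ or $n-1$) is at most $n$, Observation~\ref{obs Eulerian}(II) guarantees that $\Gamma$ is a simple path. A de Bruijn sequence of order $(n+1)$ extending $\omega$ corresponds bijectively to an Eulerian circuit of $\dBG$ whose initial segment is $\Gamma$. I would form the auxiliary graph $\tilde X$ by deleting the edges of $\Gamma$ from $\dBG$ and adjoining a virtual edge from the initial vertex $v_0$ of $\Gamma$ to its terminal vertex $v_*$. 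Every vertex of $\tilde X$ then has equal in- and out-degree, and the extensions we want are in bijection with Eulerian circuits of $\tilde X$ whose first edge is the virtual edge. Tracking the parameters in the proof of the BEST theorem, the number of such circuits is $\#(\TT_{\tilde X}) \cdot \prod_{x \in V(\tilde X)} [\deg_{\tilde X}(x) - 1]!$, where $\TT_{\tilde X}$ is the set of directed spanning trees of $\tilde X$ rooted at $v_0$.

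For $k = 3$ the degree analysis gives an immediate win: the $3^{n-1}-3$ interior vertices of $\Gamma$ have degree $2$ in $\tilde X$ (contributing $1! = 1$), while the remaining $3^n - 3^{n-1} + 3$ vertices ($v_0$, $v_*$, and the non-$\Gamma$ vertices) have degree $3$ (contributing $2! = 2$). Thus $\prod_x[\deg_{\tilde X}(x)-1]! = 2^{2\cdot 3^{n-1} + 3} = 8\cdot 4^{3^{n-1}}$, so the required bound follows — with a factor of $8$ to spare — as soon as $\TT_{\tilde X} \ne \emptyset$. By Lemma~\ref{lemmaexistsT}, this reduces to weak connectivity of $\tilde X$, which should follow from the fact that we have removed only the edges of a single simple path touching at most $3^{n-1}-1$ of the $3^n$ vertices from a $3$-regular, strongly connected graph.

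For $k = 2$ each factor $[\deg - 1]!$ is $1$, so the BEST formula collapses to $\#(\TT_{\tilde X})$ and the proposition reduces to the spanning-tree bound $\#(\TT_{\tilde X}) \ge 2^{2^{n-2}}$. This is the main obstacle. In $\tilde X$ the $2^{n-2}-4$ interior vertices of $\Gamma$ have out-degree $1$, so their parent in any directed spanning tree is forced; the remaining $2^n - 2^{n-2} + 3$ non-root vertices each offer two possible parents. The plan is to leverage the self-similarity of the binary de Bruijn graph: the projection $V(\dBG_n(\alphabet)) \to V(\dBG_{n-2}(\alphabet))$ that takes a length-$n$ string to its length-$(n-2)$ suffix partitions $V(\dBG_n(\alphabet))$ into $2^{n-2}$ fibers of size $4$. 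I would identify, within each fiber, one binary choice at a non-forced vertex that can be toggled independently of those in other fibers without ever creating a directed cycle, producing $2^{2^{n-2}}$ distinct spanning trees.

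The principal obstacle is the spanning-tree lower bound in the $k = 2$ case. For $k = 3$ only a qualitative weak-connectivity statement is needed — a soft property inherited from the robustness of the de Bruijn graph — but for $k = 2$ all of the exponential freedom must be extracted from $\#(\TT_{\tilde X})$ itself, which requires a quantitative combinatorial analysis exploiting the specific structure of the binary de Bruijn graph and how the removed path $\Gamma$ intersects it.
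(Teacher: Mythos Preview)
Your overall architecture matches the paper's proof exactly: translate the extension count into an Eulerian-circuit count on a balanced auxiliary digraph via BEST, harvest the $k=3$ bound from the degree product $\prod_x[\deg(x)-1]!$, and for $k=2$ push everything into the spanning-tree count. Your virtual-edge device is essentially equivalent to what the paper does: rather than adding an artificial edge $v_0\to v_*$, the paper first extends $\omega$ by $\Delta$ letters to a word $\omega'$ whose induced path $\gamma$ on $\dBG_n$ is a genuine closed cycle, and then works in $X_\omega=\dBG\setminus E(\gamma)$. Either manoeuvre restores the balance of in- and out-degrees; the paper's version has the mild advantage that no foreign edge contaminates the spanning-tree analysis. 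Your $k=3$ computation is the same as the paper's (up to the harmless constant factor coming from the setup difference), and both arguments defer the connectedness of the auxiliary graph to Lemma~\ref{lemmaexistsT} together with the argument in Becher--Heiber.

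The genuine gap is in the $k=2$ step, and it is not only that you have a plan rather than a proof: your proposed indexing is not the one that makes the argument go through. You partition the vertex set by length-$(n-2)$ \emph{suffix} fibers and hope to toggle one free out-edge per fiber. The paper instead indexes by the set $S$ of length-$(n-1)$ strings $\tau$ that \emph{do not occur in $\omega$} (there are $2^{n-2}$ of these), and associates to each the $4$-edge packet $E_\tau=\{a\tau b:a,b\in\alphabet\}$. The point of requiring $\tau\notin\omega$ is that then all four edges of $E_\tau$ survive in $X_\omega$, so both potential out-edges of each vertex $a\tau$ are available; this is exactly what the local-toggle Lemma~\ref{lemmaTprime} needs. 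In your fiber picture, a fiber over $\sigma$ has out-edge set $E_{0\sigma}\cup E_{1\sigma}$, but typically one of $0\sigma,1\sigma$ \emph{does} occur in $\omega$, so some of those edges have been deleted and the corresponding vertex is ``forced'' --- there is then no guarantee that a free toggle exists in that fiber.

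The second missing ingredient is the actual tree-structure argument. Even with the correct index set, it is not automatic that swapping the out-edge of $a\tau$ keeps you inside $\TT$; a cycle could form. The paper's Lemma~\ref{lemmaTprime} handles this by a short case analysis in the partial order $\le$ induced by $T$: writing $f(a)$ for the $T$-parent direction of $a\tau$ and $g(a)$ for the other, one shows that if neither single swap produced a tree then one is forced into either two siblings with a common descendant or a nontrivial directed cycle in $T$, both impossible. The disjointness of the packets $E_\tau$ then makes the $2^{n-2}$ toggles sequentially independent, giving the injection $\{0,1\}^{2^{n-2}}\hookrightarrow\TT$. This case analysis is the real content of the $k=2$ proof and is absent from your sketch.
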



\begin{proof}
For convenience we let $\Delta = 2$ if $k = 3$, and $\Delta = 3$ if $k = 2$; then $\omega$ is a de Bruijn sequence of order $(n - \Delta + 1)$. The first paragraph of Corollary \ref{lowerbound4} shows that the first $(n - \Delta)$ letters of $\omega$ are the same as the last $(n - \Delta)$ letters. So if we extend $\omega$ to a word $\omega'$ of length $k^{n - \Delta + 1} + n$ by letting $\omega_{k^{n - \Delta + 1} + i} = \omega_i$ for $i = n - \Delta + 1,\ldots,n$, then the first $n$ letters of $\omega'$ are the same as the last $n$ letters, but no other block of $n$ letters is repeated.

Let $\dBG$ be the de Bruijn graph of order $n$ on $\alphabet$, and let $\gamma$ be the path induced by $\omega'$ on $\dBG$. The length of $\gamma$ is $\ell_\gamma = k^{n - \Delta + 1}$, and $\gamma$ is a simple path that starts and ends at the same vertex $x_0$. As in the proof of Corollary \ref{lowerbound4}, we let $X = X_\omega = \dBG\butnot E(\gamma)$, where $E(\gamma)$ is the edge set of $\gamma$. The collection of de Bruijn sequences of order $(n + 1)$ that extend $\omega$ is isomorphic to the collection of Eulerian \paths\ on $\dBG$ that extend $\gamma$, which in turn is isomorphic to the collection of Eulerian paths on $X_\omega$ that start and end at $x_0$. By the BEST theorem, the cardinality of this collection is
\[
\NN \df \#(\TT) \cdot \deg(x_0;X_\omega)\cdot \prod_{x\in V(\dBG)} [\deg(x;X_\omega) - 1]!
\]
If $k = 3$, we complete the proof with the following calculation:
\begin{align*}
\NN &\geq \prod_{x\in V(\dBG)} [\deg(x;X_\omega) - 1]!
= \exp_2(\#\{x\in V(\dBG) : \deg(x;X_\omega) = 3\})\\
&= \exp_2(\#(V(\dBG)) - \ell_\gamma) = \exp_2(3^n - 3^{n - 1}) = 4^{3^{n - 1}}.
\end{align*}
In the first inequality, we have used Lemma \ref{lemmaexistsT} and the proof of \cite[Lemma 3]{BecherHeiber} to deduce that $\#(\TT) \geq 1$.

For the remainder of the proof, we assume that $k = 2$. In this case, the strategy of the above calculation cannot work, since we have $[\deg(x;X_\omega) - 1]! = 1$ for all $x\in V(\dBG)$ and thus $N \leq 2\#(\TT)$. Instead we must estimate the number of spanning trees in $X_\omega$.

Let $S$ be the set of sequences of length $(n - 1)$ that do not occur in $\omega$, and note that $\#(S) = 2^{n - 1} - 2^{n - 2} = 2^{n - 2}$. For each $\tau\in S$, let $E_\tau = \{a\tau b : a,b\in \alphabet\} \subset E(X_\omega)$, where $a\tau b$ is shorthand for $(a\tau)(\tau b)$, the edge from the vertex $a\tau$ to the vertex $\tau b$. Note that the sets $E_\tau$ ($\tau\in S$) are disjoint.

\begin{lemma}
\label{lemmaTprime}
If $T$ is a directed spanning tree and $\tau\in S$, then there exists a directed spanning tree $T' \neq T$ such that $T'\butnot E_\tau = T\butnot E_\tau$.
\end{lemma}
\begin{subproof}
By contradiction, suppose that the conclusion of the lemma is false, i.e. that there exists no such spanning tree $T'$.

Denote the partial order on $V(\dBG)$ induced by the tree $T$ by $<$, i.e. write $x < y$ if there is a path in $T$ from $x$ to $y$, and write $x \leq y$ if either $x < y$ or $x = y$. We write $x <^* y$ if $x$ is a direct descendant of $y$, i.e. if $xy\in E(T)$. For each $a\in \alphabet$, let $f(a)\in \alphabet$ be chosen to satisfy $a\tau f(a)\in E(T)$, and let $g(a) = \sigma(f(a))$, where $\sigma:\alphabet \to \alphabet$ is the permutation that swaps the two elements of $\alphabet$. Consider the graph $T' = T\cup\{a\tau g(a)\}\butnot\{a\tau f(a)\}$. Then $T' \neq T$ and $T'\butnot E_\tau = T\butnot E_\tau$, so by the contradiction hypothesis, $T'$ is not a directed spanning tree, which implies that $\tau g(a) \leq a\tau$. On the other hand, we have $a\tau <^* \tau f(a)$ since $a\tau f(a)\in T$. Now write $\alphabet = \{a,b\}$, $c = f(a)$, and $d = \sigma(c) = g(a)$. Then either $f(b) = c$ or $f(b) = d$, and thus we have one of the following two diagrams:
\[
\tau d \leq a\tau <^* \tau c >^* b\tau \geq \tau d \;\;\text{ or }\;\;
\tau d \leq a\tau < \tau c \leq b\tau < \tau d.
\]
Both diagrams are impossible for directed trees: the left-hand diagram is impossible because if $a\tau$ and $b\tau$ are siblings, then they have no common descendants, while the right-hand diagram is disjoint because it is a nontrivial directed loop. This is the desired contradiction.
\end{subproof}

It follows from Lemma \ref{lemmaTprime} that there exists a function $\phi:\TT\times S \to \TT$ such that for all $T\in \TT$ and $\tau \in S$, we have $\phi(T,\tau) \neq T$ and $\phi(T,\tau) \butnot E_\tau = T\butnot E_\tau$.

Now by Lemma \ref{lemmaexistsT} and the proof of \cite[Lemma 5]{BecherHeiber}, $X$ has a directed spanning tree $T_0$ rooted at $x_0$. Let $(\tau_i)_{i = 1}^N$ be an indexing of $S$, where $N = 2^{n - 2}$. Given $\omega\in \{0,1\}^N$, we define recursively
\begin{align*}
T_{\omega,0} &= T_0,&
T_{\omega,i} &= \begin{cases}
T_{\omega,i - 1} & \omega_i = 0\\
\phi(T_{\omega,i - 1},\tau_i) & \omega_i = 1
\end{cases}.
\end{align*}
Then the map $\{0,1\}^N\ni \omega\to T_{\omega,N} \in \TT$ is injective. Thus $\NN \geq \#(\TT) \geq \#(\{0,1\}^N) = 2^{2^{n - 2}}$, which completes the proof.
\end{proof}

\begin{corollary}
\label{lowerbound23}
Let the notation be as in Theorem \ref{MainTheoremDeBruijn}. Suppose that $k \leq 3$, and let
\begin{align*}
\alpha_k &= \begin{cases}
1/49 & \text{if } k = 2\\
(8\cdot (9\log_4(3) - 1))^{-1} & \text{if } k = 3
\end{cases}&
\Delta &= \begin{cases}
3 & \text{if } k = 2\\
2 & \text{if } k = 3
\end{cases}
\end{align*}
Then the Hausdorff dimension of the set
\[
\{\pi(\omega) \in F : \text{$B_\omega$ contains an arithmetic progression with gap size $\Delta$}\}
\]
is at least $\alpha_k \delta$.
\end{corollary}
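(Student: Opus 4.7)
The plan is to mimic the construction of Corollary \ref{lowerbound4} but replace the one-step extension counts from Proposition \ref{propositionmetricstructure} with the $\Delta$-step counts from Proposition \ref{propositioncountA2}. Fix a starting order $n_0$ large enough for that proposition to apply, and set $n_j = n_0 + j\Delta$ for $j \in \N$. Starting from any de Bruijn sequence $\omega^{(0)}$ of order $n_0$, I would recursively pick $\omega^{(j)}$ uniformly at random from among the (at least $N_j$) de Bruijn sequences of order $n_j$ extending $\omega^{(j-1)}$, where $N_j = 2^{2^{n_{j-1}}}$ if $k=2$ and $N_j = 4^{3^{n_{j-1}}}$ if $k=3$. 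Let $\mu$ be the distribution on $F$ of the resulting random $\pi(\omega)$. By construction $\mu$ is supported on points whose base-$\base$ expansion $\omega$ satisfies $B_\omega \supset \{n_0, n_1, n_2, \ldots\}$, which is an arithmetic progression of gap $\Delta$.

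To apply Lemma \ref{lemmamdp} I would bound $\mu([\tau])$ for cylinders of arbitrary length $\ell_\tau$. At the ``good'' lengths $L_j := k^{n_j}+n_j-1$ the construction gives directly $\mu([\tau]) \leq M_j := \prod_{i=1}^{j} N_i^{-1}$. For intermediate lengths $L_{j-1} < \ell_\tau \leq L_j$, the number of level-$j$ children of $\omega^{(j-1)}$ whose first $\ell_\tau$ digits agree with $\tau$ is bounded both by $N_j$ and by $k^{L_j - \ell_\tau}$ (the trivial count of completions of a given prefix to a length-$L_j$ word), yielding
\[
\mu([\tau]) \leq M_{j-1}\min\bigl(1,\, k^{L_j-\ell_\tau}/N_j\bigr).
\]
Setting $s = \alpha_k\delta$, the binding constraint on the inequality $\mu([\tau]) \leq C\base^{-\ell_\tau s}$ occurs at the transition length $\ell_\tau^* = L_j - \log_k N_j$ where the two branches of the $\min$ meet. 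Asymptotically, $\ell_\tau^* \sim 7 \cdot 2^{n_{j-1}}$ when $k=2$ and $\ell_\tau^* \sim (9-\log_3 4)\cdot 3^{n_{j-1}}$ when $k=3$, while $\log M_{j-1}^{-1}$ is asymptotic to $(\log 2)\cdot 2^{n_{j-1}}/7$ and $(\log 4)\cdot 3^{n_{j-1}}/8$ respectively. Imposing $\log M_{j-1}^{-1} \geq \ell_\tau^* s \log\base$ then yields precisely $s = \delta/49$ for $k=2$ and $s = \delta/(8(9\log_4 3 - 1))$ for $k=3$, after which Lemma \ref{lemmamdp} delivers $\HD \geq \alpha_k\delta$.

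The main obstacle is the intermediate-length estimate: Proposition \ref{propositioncountA2} only counts extensions at the arithmetic-progression orders, so at other lengths one must rely on the trivial combinatorial bound $k^{L_j-\ell_\tau}$, in contrast to the sharp length-by-length count used in the proof of Corollary \ref{lowerbound4}. It is precisely this loss that produces the smaller constants $\alpha_k$ appearing in the statement.
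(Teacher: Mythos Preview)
Your proposal is correct and follows essentially the same route as the paper: both build a probability measure by iterated uniform extension along the arithmetic progression of orders using Proposition~\ref{propositioncountA2}, bound cylinder measures at intermediate lengths by the minimum of the ``stay at the previous good level'' bound and the ``count trivially how many completions reach the next good level'' bound, locate the worst case at the crossover length, and then invoke the mass distribution principle. The paper carries out the computation explicitly for $k=3$ (with the $k=2$ case obtained by a listed substitution), arriving at the same critical exponent, and your asymptotic identification of $\ell_\tau^*$ and $\log M_{j-1}^{-1}$ matches theirs; the only cosmetic difference is that the paper fixes $n_0=\Delta$ rather than a generic large $n_0$, and phrases bound~(2) as ``union of at most $k^{L_j-\ell_\tau}$ next-level cylinders'' rather than as a conditional probability, which is equivalent.
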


\begin{proof}
Let $B = 2$ if $k = 2$ and $B = 4$ if $k = 3$. Then $\alpha_k = ((k^\Delta - 1)\cdot (k^\Delta \log_B(k) - 1))^{-1}$, and Proposition \ref{propositioncountA2} can be expressed uniformly as follows: If $\omega$ is a de Bruijn sequence of order $n$ in $\alphabet$, then the number of de Bruijn sequences of order $n + \Delta$ that extend $\omega$ is at least $\exp_B(k^n)$. We denote the set of all such extensions by $S'(\omega)$.

As in the proof of Corollary \ref{lowerbound4}, we define a probability measure $\mu$ by a random algorithm: let $\omega^{(1)}$ be a fixed de Bruijn sequence of order $\Delta$, and if $\omega^{(n)}$ is a de Bruijn sequence of order $n\Delta$, then let $\omega^{(n + 1)}$ be chosen randomly with respect to the uniform measure on $S'(\omega^{(n)})$, independent of all previous selections. As before we let $\omega\in \alphabet^\N$ be the unique common extension, we let $\pi(\omega)\in F$ be the unique number for which $\omega$ is a base $\base$ expansion, and we let $\mu$ be the probability measure describing the distribution of $\pi(\omega)$.

As before, we first estimate the measure of special cylinders, then arbitrary cylinders, then balls. For ease of notation we fix $k = 3$ in this proof; for the case $k = 2$ one can apply the substitutions $9 \mapsto 8$, $8\mapsto 7$, $4\mapsto 2$, $3\mapsto 2$, and $2\mapsto 3$. Fix $n\in\N$ and let $\tau$ be a sequence of length $9^n + 2n - 1$ in $\alphabet$. Then
\[
\mu([\tau]) \leq \prod_{i = 1}^{n - 1} \frac{1}{\#(S'(\tau^{(i)}))} \leq \prod_{i = 1}^{n - 1} \frac{1}{\exp_4(3^{2i})} = \exp_4\left(-\frac{9^n - 9}{9 - 1}\right).
\]
Now let $\tau$ be an arbitrary sequence of length $9^n + 2n - 1 < \ell_\tau \leq 9^{n + 1} + 2(n + 1) - 1$. There are two ways that we could bound $\mu([\tau])$:
\begin{itemize}
\item[1.] Since $[\tau] \subset [\tau^{(n)}]$, we have
\[
\mu([\tau]) \leq \mu([\tau^{(n)}]) \leq \exp_4\left(-\frac{9^n - 9}{8}\right).
\]
\item[2.] Since $[\tau]$ can be written as the union of at most $\exp_3(9^{n + 1} + 2(n + 1) - 1 - \ell_\tau)$ cylinder sets corresponding to de Bruijn sequences of order $2(n + 1)$, we have
\[
\mu([\tau]) \leq \exp_3(9^{n + 1} + 2(n + 1) - 1 - \ell_\tau) \cdot \exp_4\left(-\frac{9^{n + 1} - 9}{8}\right).
\]
\end{itemize}

Which of these bounds is better depends on the value of $\ell_\tau$. Now, as in the proof of Corollary \ref{lowerbound4}, we have $\diam([\tau]) = c\cdot \base^{-\ell_\tau}$ for some constant $c$. Fix $0 < s < \alpha_3 \delta$. To apply the mass distribution principle, we need to show that
\[
\mu([\tau]) \leq C \cdot \diam([\tau])^s
\]
for some constant $C$. It is enough to show that
\[
\textbf{min}\left(4^{-9^n/8},
\exp_3(9^{n + 1} + 2n - \ell_\tau) \cdot 4^{-9^{-n + 1}/8}\right) \leq C \cdot \base^{-s\ell_\tau} = C \cdot 3^{-t\ell_\tau},
\]
possibly with a different value of $C$, where $t = s/\delta < \alpha_3 < 1$. Equivalently, we need to show that
\[
\textbf{min}\left(4^{-9^n/8} \cdot 3^{t\ell_\tau},
3^{9^{n + 1}} \cdot 9^n \cdot 4^{-9^{n + 1}/8}\cdot 3^{(t - 1)\ell_\tau}\right) \leq C.
\]
Now the first input to the binary operator $\textbf{min}$ is an increasing function of $\ell_\tau$, while the second input is a decreasing function of $\ell_\tau$. It follows that the largest value the left-hand side can attain is the value attained when the two inputs to $\textbf{min}$ are equal, i.e. when
\[
4^{-9^n/8} = 3^{9^{n + 1}} \cdot 9^n \cdot 4^{-9^{n + 1}/8}\cdot 3^{-\ell_\tau},
\]
at which point the left-hand side is
\[
4^{-9^n/8} \cdot \left(3^{9^{n + 1}} \cdot 9^n \cdot 4^{9^n/8 - 9^{n + 1}/8}\right)^t.
\]
We need this expression to be bounded as $n\to\infty$. Applying the change of variables $x = 9^n$, we need to show that
\[
\limsup_{x\to\infty} 4^{-x/8} \cdot \left(3^{9x}\cdot x\cdot 4^{x/8 - 9x/8}\right)^t < \infty.
\]
This is true if and only if
\[
4^{-1/8} \cdot (3^9 \cdot 4^{-1})^t < 1,
\]
which in turn is true if and only if $t < \alpha_3$. This proves that the hypothesis of the mass distribution principle holds for cylinder sets. As in the proof of Corollary \ref{lowerbound4}, any subset of $F$ can be covered by at most two cylinder sets with comparable diameter, so the hypothesis of the mass distribution principle holds for arbitrary sets as well.
\end{proof}

Combining Corollaries \ref{lowerbound4} and \ref{lowerbound23} yields Theorem \ref{MainTheoremDeBruijn}.

\begin{remark}
\label{remarkk3k2}
Either of the strategies used in this proof, the (simpler) strategy for the $k = 3$ case or the (more complicated) strategy for the $k = 2$ case, could have been used (after minor modification) in the case $k \geq 4$ as well, but the resulting bound would have been significantly worse, measured by the fact that the analogues of $\alpha_k$ would not have tended to $1$. Similarly, the strategy for the $k = 2$ case could have been used for the $k = 3$ case, again resulting in a worse bound. In general, the principle is that whatever techniques work for one value of $k$ will also work for higher values of $k$, but may not give very good estimates for higher values of $k$.
\end{remark}

\section{Intrinsic Diophantine approximation}
\label{DiophantineApproximation}

\subsection{Diophantine approximation -- a brief survey}

We first recall some definitions and state some well-known classical theorems:

\ignore{

\begin{theorem*}[Dirichlet's Approximation Theorem]
For each $x \in \R$ and for any $Q \in \mathbb{N}$ there exists $p/q\in \Q$ with $1\leq q \leq Q$, such that
\[
\bigl|x-p/q\bigr|<\frac{1}{qQ}.
\]
\end{theorem*}

}

\begin{definition}
\label{Height}
Let $H:\Q \to\R _{>0}$ be a function. We think of $H$ as a ``height function'', and for all $p\in\Z$ and $q\in\N$, we define the {\emph{height}} of $p/q$ to be the number $H(p/q)$. We say that a function $\psi :\R_{>0}\to\R _{>0}$ is a {\emph{Dirichlet function}} (with respect to the height function $H$) if for every $x\in\R\setminus\Q$ there exist infinitely many rationals $p/q$ such that
\[
\label{dirich}
\bigl|x-p/q\bigr|<\psi(H(p/q)).
\]
\comdavid{In our other papers we introduce a constant factor... maybe we should try to be consistent?}
\end{definition}

Historically speaking, the only height function considered on the unit interval $[0,1]$ was the function $H_\std(p/q)=q$, where $p$ and $q$ are chosen in reduced form, i.e. $\gcd(p,q) = 1$. We will refer to this as the \emph{standard} height. It is readily verified that for example $\psi_0(q)=1$ and $\psi_1(q)=1/q$ are Dirichlet functions with respect to the standard height function and using the terminology of Definition \ref{Height}, Dirichlet's approximation theorem may be stated as follows:

\begin{theorem*}[Dirichlet]
\label{Dirichlet}
$\psi_2(q)=\frac{1}{q^2}$ is a Dirichlet function with respect to the standard height function.\Footnote{In fact, Dirichlet's theorem furnishes a similar result for all dimensions $d$. It was recently pointed out to us by Y. Bugeaud that the one-dimensional version of this result is actually much older, coming directly from the theory of continued fractions (see e.g. \cite[displayed equation on p.28]{Legendre}). Nevertheless, we call the theorem ``Dirichlet's theorem'' so as to conform to usual practice.}
\end{theorem*}

For our purposes, although of interest in its own right, an improvement of a Dirichlet function by a multiplicative constant is not significant. More precisely:

\begin{definition}
\label{definitionoptimal}
We say that a Dirichlet function $\psi$ is {\emph{optimal}} if there does not exist a Dirichlet function $\phi$ for which $\lim_{q\to\infty}\frac{\phi(q)}{\psi(q)}\to 0$.
\end{definition}

It is clear that Dirichlet's theorem implies that the Dirichlet functions $\psi_0$ and $\psi_1$ defined above are not optimal. The optimality of the function $\psi_2(q)=1/q^2$ was demonstrated by Liouville, who proved that quadratic irrationals are badly approximable. A real number $x$ is called \textsl{badly approximable} if there exists $c(x)>0$ such that
\[
\bigl|x- p/q \bigr| > \frac{c(x)}{q^2} \hspace{2mm} \text{ for all } p/q \in \Q.
\]
Liouville's result was later significantly improved by Jarn\'ik, who proved that the Hausdorff dimension of the set of badly approximable numbers is $1$.

\ignore{

An important observation regarding Dirichlet's theorem is that no a priory restriction is given on the reducibility of the approximating rationals, while by the definition of badly approximable numbers we have to assume that the rationals
are in \textsl{reduced form}, i.e. the ``worst case scenario'' for the inequality to be satisfied.\\

}

\subsection{Iterated function systems, limit sets, and Hausdorff dimension}
\label{Hutchinson}

Let $k\geq 2$ be an integer. In what follows, we shall consider a finite famiily $(S_i)_{i = 1}^k$ of contracting similarities on the unit interval $I=[0,1]$. This means that for every $1 \leq i \leq k$, the map $S_i : I \to I$ satisfies
\[
|S_i(x)-S_i(y)|=c_i|x-y| \all x,y\in I
\]
for some $0<c_i<1$. We shall call such a family of similarities an \textsl{Iterated Function System} or IFS. A nonempty compact set $F \subset I$ is said to be the \textsl{attractor} or the \textsl{limit set} of the IFS if
\[
F=\bigcup_{i=1}^{k}S_i(F).
\]
It is well known (see e.g., \cite[Chapter 9]{Falconer_book}) that the attractor $F$ exists and is unique. Furthermore, if there exists a bounded nonempty open set $U$ such that
\[
\bigcup _{i=1}^{k}S_i(U)\subset U
\]
with the union disjoint, then the IFS is said to satisfy the \emph{open set condition}. In this case, the Hausdorff dimension of the attractor is equal to the unique solution $s > 0$ of the equation
\begin{equation}
\label{hutchinson}
\sum_{i=1}^{k}c_i^{s}=1.
\end{equation}
We say that that the IFS $(S_i)_{i = 1}^k$ satisfies the \textsl{strong separation condition} if
\[
S_i(F)\cap S_j(F) = \emptyset
\]
for all $i\neq j$, where $F$ is the attractor.\Footnote{Note that the strong separation condition implies (but is not implied by) the open set condition.}\\

A particularly important example of an iterated function system is the system
\begin{equation}
\label{baryIFS}
S_i(x) = \frac{i + x}{\base}, \;\;\;\; i \in C(\base) \df \{0,\ldots,\base - 1\},
\end{equation}
where $\base \geq 2$ is fixed. This system satisfies the open set condition (with $U = (0,1)$) but not the strong separation condition, and its attractor is the entire interval $I$. In some sense this IFS encodes the base $\base$ expansion(s) of any number in the interval $[0,1]$, since the number
\[
x = \pi(\omega) = 0.\omega_1\omega_2\cdots \text{ (base $\base$) } = \sum_{i = 1}^\infty \frac{\omega_i}{\base^i}
\]
can be written as
\[
x = \lim_{n\to\infty} S_{\omega_1}\circ\cdots\circ S_{\omega_n}(0).
\]
By looking at subsystems of the system \eqref{baryIFS}, we can find IFSes whose limit sets can be described in terms of base $\base$ expansions. Fix $\alphabet \subset C(\base)$, and consider the subsystem of \eqref{baryIFS} consisting of the similarities $(S_i)_{i\in \alphabet}$. We call such a subsystem a \emph{base $\base$ IFS}. Its limit set is precisely the set of all numbers in $[0,1]$ that have at least one base $\base$ expansion whose digits all lie in $\alphabet$, i.e.
\begin{equation}
\label{equationfractal}
F = \left\{x \in [0,1] : \exists \omega \in \alphabet^\N \text{ with } x = \sum^\infty_{i=1} \frac{\omega_i}{\base^i}\right\}.
\end{equation}
For example, if $\base = 3$ and $\alphabet = \{0,2\}$, then $F$ is the standard Cantor ternary set, i.e. the set of all numbers in $[0,1]$ that have at least one base $3$ expansion containing only the digits $0$ and $2$.

It follows directly from \eqref{hutchinson} that the Hausdorff dimension of the base $\base$ IFS corresponding to an alphabet $\alphabet \subset C(\base)$ is precisely $\log\#(\alphabet) / \log(\base)$.

We remark that it is easy to check whether a base $\base$ IFS satisfies the strong separation condition:
\begin{observation}
The base $\base$ IFS defined by the alphabet $\alphabet \subset C(\base)$ satisfies the strong separation condition if and only if at least one of the following is true:
\begin{itemize}
\item[(1)] $0 \notin \alphabet$.
\item[(2)] $\base - 1 \notin \alphabet$.
\item[(3)] $\alphabet$ does not contain any pair of consecutive integers.
\end{itemize}
\end{observation}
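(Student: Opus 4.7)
The plan is to analyze directly when the sets $S_i(F)$ and $S_j(F)$ can meet, using the explicit form $S_i(x)=(i+x)/\base$. First I would observe that $S_i([0,1])=[i/\base,(i+1)/\base]$, so the image intervals of two distinct maps are either disjoint (when $|i-j|\geq 2$) or share exactly the single point $(i+1)/\base$ (when $j=i+1$). Since $F\subset[0,1]$, this already rules out all intersections among non-consecutive indices, so the strong separation condition can fail only along a consecutive pair $i,i+1\in\alphabet$, and then $S_i(F)\cap S_{i+1}(F)$ is contained in the singleton $\{(i+1)/\base\}$.

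Next I would reduce the remaining question to whether the endpoints $0,1$ belong to $F$. Applying $S_i^{-1}$ shows that $(i+1)/\base\in S_i(F)$ iff $1\in F$, and applying $S_{i+1}^{-1}$ shows that $(i+1)/\base\in S_{i+1}(F)$ iff $0\in F$. So a consecutive pair in $\alphabet$ actually causes separation to fail iff both $0$ and $1$ lie in $F$.

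Finally I would use the description \eqref{equationfractal} to verify that $0\in F$ iff $0\in\alphabet$ and $1\in F$ iff $\base-1\in\alphabet$. The ``if'' directions are witnessed by the admissible expansions $0=\sum_{i\geq 1}0\cdot\base^{-i}$ and $1=\sum_{i\geq 1}(\base-1)\base^{-i}$. The ``only if'' directions follow from the estimates $\pi(\omega)\geq (\min\alphabet)/(\base-1)>0$ whenever $0\notin\alphabet$, and $\pi(\omega)\leq (\max\alphabet)/(\base-1)<1$ whenever $\base-1\notin\alphabet$, both valid for every $\omega\in\alphabet^{\N}$. Chaining these equivalences, strong separation fails iff $\alphabet$ contains a consecutive pair and $\{0,\base-1\}\subset\alphabet$; negating this conjunction yields exactly the disjunction of conditions (1), (2), (3).

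There is no real obstacle in this argument. The only mildly subtle point is ensuring that $S_i(F)\cap S_{i+1}(F)\subset\{(i+1)/\base\}$, but this is immediate from $F\subset[0,1]$ together with the disjointness of the interiors of $S_i([0,1])$ and $S_{i+1}([0,1])$. Once this geometric reduction is in place, the remainder is just an elementary verification of which endpoints of $[0,1]$ the attractor contains.
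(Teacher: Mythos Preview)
Your argument is correct and complete: the reduction to the single boundary point $(i+1)/\base$ for consecutive indices, followed by the endpoint criterion $0\in F \Leftrightarrow 0\in\alphabet$ and $1\in F \Leftrightarrow \base-1\in\alphabet$, is exactly the natural route. The paper itself does not supply a proof---it merely flags the observation as ``easy to check''---so there is nothing to compare against; your write-up would serve perfectly well as the omitted verification.
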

If a base $\base$ IFS satisfies the strong separation condition, then every element of its limit set $F$ has exactly one base $\base$ expansion whose digits come from $\alphabet$. In this case, there is no ambiguity about talking about ``the base $\base$ expansion'' of a number in $F$, since we understand that if there is more than one base $\base$ expansion, then we are talking about the one whose digits come from $\alphabet$.

\subsection{Intrinsic approximation on limit sets}
\label{fractals}

Let $F\subset\R$ be a closed set, which we will think of as a fractal. The field of \emph{intrinsic Diophantine approximation} is concerned with finding rational approximations to an irrational number $x\in F$ by rational numbers that lie \emph{on} the fractal $F$. Thus Mahler's first question is about intrinsic approximation on the Cantor set.  More generally, one may ask about intrinsic approximation on the attractor of any similarity IFS. This leads to the following definition:

\begin{definition}
\label{Height2}
Let $F \subset \R$ be a closed set, and let $H:F\cap \Q \to\R _{>0}$ be a height function. We say that a function $\psi :\R_{>0}\to\R _{>0}$ is an {\emph{intrinsic Dirichlet function on $F$}} (with respect to the height function $H$) if for every $x\in F\setminus\Q$ there exist infinitely many rationals $p/q \in F\cap \Q$ such that
\[
\label{dirich2}
\bigl|x-p/q\bigr|<\psi(H(p/q)).
\]
Optimality of intrinsic Dirichlet functions can be defined in the same way as in Definition \ref{definitionoptimal}.
\end{definition}

We have the following result:

\begin{proposition}[{\cite[Corollary 2.2]{BFR}}]
\label{BFRresult}
Let $F$ be the limit set of a base $\base$ IFS, and let $\delta$ be the Hausdorff dimension of $F$. Then for all $x\in F$, there exist infinitely many rational numbers $p/q\in F$ ($p\in\mathbb{Z}$, $q\in\mathbb{N}$) such that
\[
\bigl|x-p/q\bigr| < \frac{1}{q(\log_\base q)^{1/\delta}}\cdot
\]
In other words, the function $\psi_*(q) = (q\cdot (\log_\base q)^{1/\delta})^{-1}$ is an intrinsic Dirichlet function on $F$ for the standard height function.
\end{proposition}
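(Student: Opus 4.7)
Let $F$ be the limit set of the base-$\base$ IFS with alphabet $\alphabet \subset C(\base)$ and $k = \#(\alphabet) \geq 2$, so that $\delta = \log(k)/\log(\base)$. Fix $x \in F \setminus \Q$ and a base-$\base$ expansion $\omega = \omega_1\omega_2\cdots$ of $x$ whose digits lie in $\alphabet$; such an expansion exists by \eqref{equationfractal}. The plan is to build, for each sufficiently large $n$, one rational $r_n \in F \cap \Q$ satisfying the desired inequality, by using pigeonhole on length-$n$ subblocks of $\omega$ and then forming an eventually periodic expansion from the repetition.

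\textbf{Step 1: pigeonhole.} For each $n$, consider the $k^n + 1$ length-$n$ blocks $B_p \df \omega_{p+1}\omega_{p+2}\cdots\omega_{p+n}$ for $p = 0, 1, \ldots, k^n$. Since there are only $k^n$ distinct words of length $n$ in $\alphabet$, there exist indices $0 \leq i_1 < i_2 \leq k^n$ with $B_{i_1} = B_{i_2}$. Define $r_n$ to be the rational with base-$\base$ expansion
\[
r_n = 0.\omega_1\cdots\omega_{i_1}\,\overline{\omega_{i_1+1}\cdots\omega_{i_2}}
\]
(pre-period of length $i_1$, period of length $i_2 - i_1$). All digits of this expansion lie in $\alphabet$, so $r_n \in F$; since the expansion is eventually periodic, $r_n \in \Q$.

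\textbf{Step 2: estimating $q$ and $|x - r_n|$.} Writing $r_n = p_n/q_n$ in lowest terms, one checks that $q_n$ divides $\base^{i_1}(\base^{i_2 - i_1} - 1)$, hence
\[
q_n \leq \base^{i_2} - \base^{i_1} < \base^{i_2}.
\]
By construction the expansion of $r_n$ agrees with $\omega$ at positions $1, \ldots, i_2$; the pigeonhole match $B_{i_1} = B_{i_2}$ forces agreement also at positions $i_2 + 1, \ldots, i_2 + n$. Therefore
\[
|x - r_n| \leq \base^{-(i_2 + n)}.
\]

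\textbf{Step 3: verifying the Dirichlet inequality.} Since $i_2 \leq k^n$ and $\base = k^{1/\delta}$,
\[
q_n\bigl(\log_\base q_n\bigr)^{1/\delta} < \base^{i_2}\cdot i_2^{1/\delta} \leq \base^{i_2}\cdot (k^n)^{1/\delta} = \base^{i_2}\cdot\base^n = \base^{i_2 + n},
\]
so $|x - r_n| \leq \base^{-(i_2+n)} < 1/\bigl(q_n(\log_\base q_n)^{1/\delta}\bigr) = \psi_*(q_n)$, where the strict inequality on the right comes from the strict bound $q_n < \base^{i_2}$. Finally, as $n \to \infty$ the distances $|x - r_n|$ tend to $0$, so (since $x \notin \Q$) the sequence $(r_n)$ takes infinitely many distinct values, completing the proof.

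\textbf{Main obstacle.} The conceptual content is entirely in the bookkeeping of Step 3: the pigeonhole forces $i_2 \leq k^n$, which is exactly the right quantitative tradeoff so that the extra $\base^n$ factor in the denominator bound matches $(\log_\base q)^{1/\delta}$ under the identity $\base = k^{1/\delta}$. The only mild subtlety is ensuring strict (rather than weak) inequality $|x - r_n| < \psi_*(q_n)$; this is handled by noting $q_n < \base^{i_2}$ strictly (rather than $\leq$), since $\base^{i_1}$ is subtracted. Small values of $q$ (e.g.\ $q=1$, where $\log_\base q = 0$) are a harmless edge case, affecting only finitely many $n$.
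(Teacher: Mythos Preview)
Your proof is correct and follows precisely the route the paper attributes to \cite{BFR}: one first establishes the symbolic-height version (your pigeonhole construction gives a rational with $H_{\sym}(r_n)\le \base^{i_2}-\base^{i_1}$), and then the inequality $q_n=H_{\std}(r_n)\le H_{\sym}(r_n)$ transfers this to the standard height. The only place where a reader might want one more line is the claim that $B_{i_1}=B_{i_2}$ forces agreement through position $i_2+n$ even when the period $d=i_2-i_1$ is shorter than $n$; this follows from the $d$-periodicity $\omega_{i_1+s}=\omega_{i_1+s+d}$ for $s=1,\ldots,n$ iterated along the chain $m',m'+d,\ldots,m$, but is routine.
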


\section{The symbolic height function}

\label{Diophantine}

Let $F$ be the limit set of a base $\base$ IFS satisfying the strong separation condition, and fix a rational number $r \in F \cap \Q$. It is well known that the base $\base$ expansion of $r$ is preperiodic, i.e.
\begin{equation}
\label{equationrationalform} r = 0.\omega_1\ldots \omega_i \overline{\omega_{i+1}\ldots \omega_{i + j} \vphantom{t}} \;\;\text{ (base $\base$)},
\end{equation}
for some $i \geq 0$, $j \geq 1$, and $\omega_1,\ldots,\omega_{i + j}\in \alphabet$. Here the bar indicates that the string $\omega_{i + 1} \cdots \omega_{i + j}$ is infinitely repeated. Rewriting the right-hand side as a sum of fractions yields
\begin{eqnarray*}
r &=& \frac{\omega_1 \ldots \omega_i}{\base^i} +  \sum^\infty_{m=1} \frac{\omega_{i+1}\ldots \omega_{i + j}}{\base^{i + mj}} \\ &=& \frac{\omega_1 \ldots \omega_i}{\base^i} + \frac{\omega_{i+1}\ldots \omega_{i + j}}{\base^{i}}\cdot \frac{1/\base^{j}}{1 - 1/\base^{j}}\\ &=& \frac{\omega_1 \ldots \omega_i}{\base^i} + \frac{\omega_{i+1}\ldots \omega_{i + j}}{\base^{i}}\cdot \frac{1}{\base^{j}-1}
\end{eqnarray*}
where $\omega_1\ldots \omega_i$ and $\omega_{i + 1}\ldots \omega_{i + j}$ are integers that have been written in base $\base$. Adding the two resulting fractions together, we end up with a (complicated) expression whose denominator is $\base^i(\base^{j}-1)$. Further cancellations may or may not be possible, but we can \emph{always} write the rational number as a fraction of two integers, the denominator of which is $\base^i(\base^{j}-1)$.

This fact leads to a natural height function on $F\cap \Q$ related to the base $\base$ structure of the fractal $F$:
\begin{equation}\label{eqsymbheight}
H_\sym(r) \df \base^i \cdot (\base^{j}-1),
\end{equation}
where the indices $i$ and $j$ are the smallest integers such that $r$ can be written in the form \eqref{equationrationalform}. The function $H_\sym$ is called the \emph{symbolic} height function. It was studied in a more general context in \cite{FishmanSimmons2}. Notice the symbolic height of a rational number may not be the same as its standard height (i.e. its denominator in reduced form). For example, the rational number
$0.\overline{20\vphantom{I}}_{3}$ in the Cantor ternary set is equal to $\frac{3}{4}$, so its standard height is $4$. Nonetheless, the symbolic height of $0.\overline{20\vphantom{I}}_3$ is $3^0\cdot (3^2 - 1) = 8$. It should be thought of as the denominator resulting from the following calculation:
\begin{eqnarray*}
0.\overline{20}_3 &=& \frac{20_3}{3^0} \sum^\infty_{m=1} \left(\frac{1}{3^2}\right)^m \quad = \quad \frac{6}{1} \cdot \frac{1/3^2}{1 - 1/3^2}\\ &=& \frac{6}{1} \cdot \frac{1/9}{8/9} \quad = \quad \frac{6}{8}\cdot
\end{eqnarray*}
Although more cancellation is possible at the end of this calculation, this will not always be the case,\Footnote{For example, the fraction at the end of the calculation
\[
0.2\overline{70}_9 = \frac{2_9}{9} + \frac{70_9}{9} \cdot \frac{1}{9^2 - 1} = \frac{2\cdot 80 + 7\cdot 9}{9\cdot 80} = \frac{223}{720}
\]
is already in reduced form.} so in a principled way we have stopped reducing the fraction here. The calculation illustrates the fact that the symbolic height of a rational number $r$ can be thought of as a ``symbolic denominator'', i.e. the denominator of a certain representation of $r$ as the quotient of two integers. The numerator of this representation can be thought of as a ``symbolic numerator'' (in the above example the symbolic numerator would be 2), but as usual, for purposes of Diophantine approximation it is simpler to just work with the denominator. Note that the standard height is by definition smaller than the symbolic one, since we have $p_\std/q_\std = p_\sym/q_\sym$, but the left-hand side is in reduced form.

We remark that heuristically, if we are given two rational numbers $r_1$ and $r_2$, and we are told that $r_1$ lies in the limit set of a base $\base$ IFS, but we are not told anything about $r_2$, then we should expect the (multiplicative) discrepancy between the standard height and the symbolic height to be smaller for $r_1$ than for $r_2$. This is because if we choose the numerator and denominator of a rational randomly, then the numbers $i$ and $j$ satisfying \eqref{equationrationalform} may be comparable to the standard height of the rational (meaning that the symbolic height is an exponential function of the standard height), but the number would be exceedingly unlikely to lie in any base $\base$ limit set, since its digits would essentially be random. By contrast, if we choose the digits of a rational randomly out of a fixed alphabet $\alphabet$ (with a fixed period and preperiod), then the amount of cancellation we expect to see in the symbolic representation of the rational will be much smaller, so the standard height and symbolic height will be relatively close. More heuristics regarding the relation between the symbolic height function and the standard one were discussed in \cite{FishmanSimmons2}.

One reason the symbolic height function is interesting is that it naturally shows up in the proofs of results regarding the standard height function. For example, the proof of Proposition \ref{BFRresult} can easily be modified to bound $|x - p/q|$ in terms of the symbolic height of $p/q$ rather than the standard height:

\begin{proposition}[{\cite[Proof of Corollary 2.2]{BFR}}]
\label{BFRresult2}
Let $F$ be the limit set of a base $\base$ IFS, and let $\delta$ be the Hausdorff dimension of $F$. Then for all $x\in F$, there exist infinitely many rational numbers $r = p_\sym/q_\sym\in F$ such that
\[
\bigl|x-p/q\bigr| < \frac{1}{q_\sym(\log_\base q_\sym)^{1/\delta}}\cdot
\]
In other words, the function $\psi_*(q) = (q\cdot (\log_\base q)^{1/\delta})^{-1}$ is an intrinsic Dirichlet function on $F$ for the symbolic height function.
\end{proposition}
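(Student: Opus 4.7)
The plan is to imitate the proof of Proposition \ref{BFRresult} from \cite{BFR}, but to bookkeep the symbolic height of the approximants rather than the standard height. This replacement strengthens, rather than weakens, the conclusion, since $H_\sym \geq H_\std$; what makes it feasible is that the rationals produced by the argument are naturally presented as periodic $\base$-ary expansions $r = 0.\omega_1\cdots\omega_i\overline{\omega_{i+1}\cdots\omega_{i+j}}$, whose symbolic height $\base^i(\base^j - 1)$ can be read off directly from the construction.

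Concretely, given $x \in F \setminus \Q$ with base-$\base$ expansion $\omega = \omega_1\omega_2\cdots$ in digits from $\alphabet$, and $n$ large, I would set $L = \lfloor\log_k(n+1)\rfloor$, so that $k^L \leq n+1$. Among the $n+1$ length-$L$ substrings $s_i = \omega_{i+1}\cdots\omega_{i+L}$ for $i = 0, \ldots, n$, which take only $k^L$ possible values, the pigeonhole principle produces indices $0 \leq i < i' \leq n$ with $s_i = s_{i'}$. Setting $r_n := 0.\omega_1\cdots\omega_i\overline{\omega_{i+1}\cdots\omega_{i'}}$, all the base-$\base$ digits of $r_n$ lie in $\alphabet$, so $r_n \in F$, and directly $H_\sym(r_n) \leq \base^i(\base^{i'-i} - 1) < \base^{i'} \leq \base^n$. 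The first $i'$ digits of $r_n$ and $x$ agree by construction; moreover, positions $i'+1$ through $i'+L$ of $r_n$ are $\omega_{i+1},\ldots,\omega_{i+L}$, which by the pigeonhole equality (combined with an easy induction on the period in the case $i'-i < L$) equal the corresponding digits $\omega_{i'+1},\ldots,\omega_{i'+L}$ of $x$. Hence $|x - r_n| \leq \base^{-(i'+L)}$.

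To convert this into the claimed Dirichlet form, let $q := H_\sym(r_n)$. The bound $L \geq \log_k(n+1) - 1$ combined with the identity $1/\delta = \log(\base)/\log(k)$ gives $\base^{-L} \leq \base \cdot n^{-1/\delta}$, and $\log_\base q < i' \leq n$ gives $n^{-1/\delta} \leq (\log_\base q)^{-1/\delta}$. Combining, $|x - r_n| < \base \cdot q^{-1}(\log_\base q)^{-1/\delta}$, i.e., the desired bound up to a harmless multiplicative constant. Since $|x - r_n| \to 0$ as $n \to \infty$ and $x$ is irrational, infinitely many distinct $r_n$ arise, and the factor $\base$ can be absorbed into $\psi_*$ in the usual way for Dirichlet-type statements.

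The main subtlety, and what underlies the $1/\delta$ exponent, is the numerical coincidence that the length $L \approx \log_k n$ of the common prefix extracted from pigeonhole contributes a factor of $\base^{-L} \approx n^{-1/\delta}$ to the error estimate, which is exactly the Hausdorff-dimensional improvement over the naive Dirichlet bound $\base^{-n}$; this enters the argument through the identity $\delta = \log(k)/\log(\base)$, i.e., through the self-similar structure of $F$.
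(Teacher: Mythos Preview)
The paper does not supply its own proof of this proposition; it simply cites \cite[Proof of Corollary~2.2]{BFR} and remarks that the argument there in fact establishes the symbolic-height version first and then deduces the standard-height version via $H_\std \leq H_\sym$. Your argument---pigeonhole on the $n+1$ length-$L$ substrings with $L = \lfloor\log_k(n+1)\rfloor$, extraction of a periodic approximant $r_n = 0.\omega_1\cdots\omega_i\overline{\omega_{i+1}\cdots\omega_{i'}}$, and the estimate $|x - r_n| \leq \base^{-(i'+L)}$---is exactly that argument, with the symbolic height read off directly from the period data, so your approach coincides with what the paper intends. The residual multiplicative constant $\base$ you obtain is indeed harmless for the Dirichlet-function conclusion (the paper itself is loose on this point; note the authors' own editorial comment after Definition~\ref{Height}).
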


In fact, the proof of \cite[Corollary 2.2]{BFR} essentially proceeds by first proving Proposition \ref{BFRresult2} and then using the inequality $H_\std \leq H_\sym$ to deduce Proposition \ref{BFRresult}. It appears extremely difficult to prove any improvement (either for all points or only for some) of Proposition \ref{BFRresult} for the standard height without just proving the same bound for the symbolic height. So in some way, the symbolic height is measuring the ``strength of our techniques''.

Although the symbolic height function is motivated in terms of the standard height function, it can also be analyzed on its own terms. For example, we can ask whether the intrinsic Dirichlet function $\psi_*$ appearing in Proposition \ref{BFRresult2} is optimal for the symbolic height function. This is the same (cf. \cite[\62.1]{FishmanSimmons5}) as asking whether there exist any points in $F$ that are badly symbolically approximable with respect to $\psi_*$:

\begin{definition}[Special case of {\cite[Definition 4.7]{FishmanSimmons2}}]
Let $F$ be a base $\base$ limit set, and let $\delta$ denote the Hausdorff dimension of $F$. A number $x\in F$ is called \emph{badly symbolically approximable (with respect to $\psi_*$)} if there exists $\kappa > 0$ such that for every $r = p_\sym/q_\sym \in F\cap \Q$, we have
\begin{equation}
\label{BAdef}
|x-r| \geq \frac{\kappa}{{q_\sym(\log_\base q_\sym)^{1/\delta}}}.
\end{equation}
\end{definition}

\begin{theorem}[Corollary of {\cite[Lemma 4.9]{FishmanSimmons2}}; or see below]
\label{BAintrinsic}
Let $F$ be the limit set of a base $\base$ IFS satisfying the strong separation condition. Then any $x \in F$ whose base $\base$ expansion is uniformly de Bruijn is badly symbolically approximable.
\end{theorem}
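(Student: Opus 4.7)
My plan is to translate closeness of $x$ to $r$ into the existence of a long periodic block in the base $\base$ expansion of $x$, and then use the uniform de Bruijn hypothesis to bound the length of such a block. I write $\omega = \omega_1\omega_2\cdots \in \alphabet^\N$ for the (unique, by strong separation) base $\base$ expansion of $x$, and the given $r \in F\cap\Q$ as $0.\omega'_1\cdots\omega'_i\overline{\omega'_{i+1}\cdots\omega'_{i+j}}$ with $i,j$ minimal, so $q_\sym = \base^i(\base^j - 1)$ and, in particular, $i + j \leq \log_\base q_\sym + 1$ (since $\base^j - 1 \geq \base^{j-1}$). The strong separation condition should supply a constant $\rho > 0$ with $|x - r| \geq \rho\base^{-N}$, where $N$ is the largest integer for which $\omega_\ell = \omega'_\ell$ for all $\ell \leq N$; this $N$ is finite since an infinitely de Bruijn sequence cannot be eventually periodic. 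It will then suffice to prove
\[
N \leq \log_\base q_\sym + \log_k \log_\base q_\sym + O(1)
\]
uniformly in $r$; converting via the identity $1/\delta = \log_k\base$ yields the required Diophantine inequality.

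The heart of the argument will be the following combinatorial lemma: if $\omega_1\cdots\omega_{k^m + m - 1}$ is a de Bruijn sequence of order $m$, then $\omega$ contains no substring (lying within these first $k^m + m - 1$ positions) of length $\geq j + m$ that is periodic with period $j$. I expect to prove this by pigeonhole: the length-$m$ subwords starting at positions $p+1, \ldots, p + L - m + 1$ of such a periodic block are determined modulo $j$ by their starting index, so there are at most $j$ distinct length-$m$ words among them; for $L \geq j + m$ this forces two identical length-$m$ subwords, contradicting the fact that each length-$m$ word occurs exactly once in an order-$m$ de Bruijn sequence.

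To finish, I assume $N \geq i + j$ (the remaining case is trivial after absorbing into $\kappa$). Then $\omega_{i+1}\cdots\omega_N$ is a periodic block of period $j$ and length $N - i$. Using the uniform de Bruijn hypothesis (gaps in $B_\omega$ bounded by some $G$), I choose $m \in B_\omega$ with $k^m + m - 1 \geq N$ and $m \leq \log_k N + G + O(1)$. The combinatorial lemma then forces $N - i < j + m$, which gives
\[
N - \log_k N \leq i + j + G + O(1) \leq \log_\base q_\sym + O(1).
\]
A routine asymptotic bootstrap (since $\log_k N = o(N)$, this inequality forces $N \leq 2\log_\base q_\sym$ for $q_\sym$ large, whence $\log_k N \leq \log_k \log_\base q_\sym + O(1)$) then yields the bound on $N$ needed above, and the finitely many small-$q_\sym$ cases are absorbed into $\kappa$.

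I expect the combinatorial lemma to be the main obstacle, as it is the only step that actively uses the de Bruijn structure. The remaining pieces — the metric-to-symbolic conversion via strong separation, the estimate $i + j \approx \log_\base q_\sym$, the choice of $m$ via the gap bound $G$, and the asymptotic bootstrap — should be essentially bookkeeping.
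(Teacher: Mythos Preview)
Your proposal is correct and follows essentially the same strategy as the paper: convert the metric distance into a common-prefix length via strong separation, observe that a long common prefix with a preperiodic $r$ forces a long periodic block in $\omega$, and then use the de Bruijn structure at a nearby level in $B_\omega$ to bound that block. Your combinatorial lemma (a periodic block of length $\geq j+m$ within an order-$m$ de Bruijn prefix forces a repeated length-$m$ word) is precisely the observation the paper uses, phrased contrapositively.

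The one organizational difference worth noting is the direction in which you pick the de Bruijn level. You choose $m\in B_\omega$ \emph{above} $\log_k N$ so that the periodic block sits inside the order-$m$ de Bruijn prefix, and then your lemma gives $N - i < j + m$; this leaves you with $N - \log_k N \leq \log_\base q_\sym + O(1)$ and forces a short bootstrap. The paper instead picks $n\in B_\omega$ \emph{below} the repeat length $m-j$ (in its indexing), and then the presence of the repeat forces $m > k^n + n - 1$; this yields $k^{m-j-\ell}\leq j$ directly, i.e. $\base^{m-j-\ell}\leq j^{1/\delta}$, with no bootstrap needed. Both choices work; the paper's is marginally cleaner, while yours makes the combinatorial obstruction (your pigeonhole lemma) more explicit.
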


Combining with Theorem \ref{MainTheoremDeBruijn} gives:

\begin{corollary}
\label{corollaryoptimality}
With $F$ as above, the set of badly symbolically approximable points has dimension at least $\alpha_k \delta > 0$, where
\[
\alpha_k = \begin{cases}
1/49 & k=2\\
(8\cdot (9\log_4(3) - 1))^{-1} & k= 3\\
\frac{\log(k-2)!}{k\log(k)} & k \geq 4 \end{cases}.
\]
In particular, the intrinsic Dirichlet function $\phi_*$ appearing in Proposition \ref{BFRresult2} is optimal.
\end{corollary}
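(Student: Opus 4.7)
The plan is to derive this statement as a short combination of Theorem~\ref{BAintrinsic} and Theorem~\ref{MainTheoremDeBruijn}. First I would observe that Theorem~\ref{BAintrinsic} gives the inclusion $S \subseteq \mathrm{BA}_\sym$, where $S$ denotes the set from Theorem~\ref{MainTheoremDeBruijn} of points in $F$ admitting a uniformly de Bruijn base $\base$ expansion, and $\mathrm{BA}_\sym$ denotes the set of badly symbolically approximable points in $F$ (with respect to $\psi_*$). Monotonicity of Hausdorff dimension under set inclusion, together with the lower bound in Theorem~\ref{MainTheoremDeBruijn}, then yields
\[
\HD(\mathrm{BA}_\sym) \geq \HD(S) \geq \alpha_k \delta > 0,
\]
which is the first assertion of the corollary.

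For the optimality claim, I would argue by contradiction. Suppose there exists an intrinsic Dirichlet function $\phi$ on $F$ (with respect to $H_\sym$) for which $\phi(q)/\psi_*(q) \to 0$ as $q\to\infty$. The dimension bound just established guarantees $\mathrm{BA}_\sym$ is nonempty, so pick $x \in \mathrm{BA}_\sym$ with associated constant $\kappa > 0$; note that $x$ is automatically irrational, since a uniformly de Bruijn sequence contains every finite pattern and hence cannot be eventually periodic. Because $\phi$ is a Dirichlet function, there are infinitely many $r = p_\sym/q_\sym \in F\cap\Q$ with $|x-r| < \phi(q_\sym)$. Since for each bound $N$ there are only finitely many rationals in $F$ with $H_\sym \leq N$ (each symbolic denominator $\base^i(\base^j-1)$ determines a finite set of $(i,j)$, and each $(i,j)$ admits only finitely many preperiod/period words in $\alphabet$), we must have $q_\sym \to \infty$ along these $r$. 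But then for all sufficiently large such $q_\sym$ the hypothesis $\phi(q_\sym)/\psi_*(q_\sym) \to 0$ gives $\phi(q_\sym) < \kappa \psi_*(q_\sym)$, and the definition of $\mathrm{BA}_\sym$ gives $|x-r| \geq \kappa \psi_*(q_\sym) > \phi(q_\sym)$, contradicting the choice of $r$.

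The substance of the corollary lies entirely in the two theorems being combined; assembling them into this statement is essentially routine, so there is no real obstacle beyond the bookkeeping of verifying (i) the elementary claim about finiteness of rationals in $F$ of bounded symbolic height, and (ii) that uniformly de Bruijn expansions are nonperiodic, both of which are immediate from the definitions.
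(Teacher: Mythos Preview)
Your proposal is correct and matches the paper's approach exactly: the paper does not give a separate proof of this corollary at all, simply stating that it follows by combining Theorem~\ref{BAintrinsic} with Theorem~\ref{MainTheoremDeBruijn}, and remarking that for the optimality claim alone the mere existence of a uniformly de Bruijn sequence (hence of a badly symbolically approximable point) suffices. Your write-up just fills in the routine details; one small remark is that the irrationality of any $x \in \mathrm{BA}_\sym$ already follows directly from the definition (take $r = x$), so the appeal to non-periodicity of de Bruijn sequences, while correct, is not needed.
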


We remark that the optimality assertion follows directly from combining Theorem \ref{BAintrinsic} with \cite[Corollary 7]{BecherHeiber}; Theorem \ref{MainTheoremDeBruijn} is not needed.

In contrast to Proposition \ref{BFRresult2}, Theorem \ref{BAintrinsic} and Corollary \ref{corollaryoptimality} are weaker than their (unproven) analogues for the standard height function. This is because while Proposition \ref{BFRresult2} is about finding good approximations to points, in Theorem \ref{BAintrinsic} and Corollary \ref{corollaryoptimality} we show that for certain points, good approximations cannot exist. But the inequality $H_\std \leq H_\sym$ means that the quality of an approximation is better according to the standard height than according to the symbolic height, which yields the appropriate implications.

We remark that Theorem \ref{BAintrinsic} is only a one-way implication: there may be (and almost certainly are) badly symbolically approximable numbers whose base $\base$ expansions are not uniformly de Bruijn. A combinatorial characterization of the base $\base$ expansions of badly symbolically approximable numbers was given in \cite[Lemma 4.9]{FishmanSimmons2}. As a consequence of the one-sidedness of the implication, Theorem \ref{BAintrinsic} yields a lower bound on the dimension of the set of badly symbolically approximable points but not an upper bound. In fact, we believe that there is no nontrivial upper bound: we conjecture that the Hausdorff dimension of the set of badly symbolically approximable points of any base $\base$ limit set $F$ is equal to the Hausdorff dimension of $F$. This conjecture is motivated by other situations in Diophantine approximation where the dimension of the set of badly approximable points has always turned out to be full. However, Theorem \ref{MainTheoremDeBruijn} shows that this conjecture cannot be proven using uniformly de Bruijn sequences.


Although Theorem \ref{BAintrinsic} is a consequence of the much more general result \cite[Lemma 4.9]{FishmanSimmons2}, we prove it here for completeness and ease of exposition.

\begin{proof}[Proof of Theorem \ref{BAintrinsic}]
Let $x \in F$ be a number whose base $\base$ expansion, which we denote by $\omega$, is uniformly de Bruijn. Let $\ell$ denote the size of the largest gap in the set $B_\omega$ defined by \eqref{deBruijn}. Fix $r \in F \cap \Q$, and let the representation $r = 0.\tau_1\dots \tau_i \overline{\tau_{i+1} \dots \tau_j\vphantom{t}}$ be chosen so as to minimize $i$ and $j$. Then the symbolic height of $r$, as defined in \eqref{eqsymbheight}, is $q_\sym = \base^i(\base^{j-i} - 1) \leq \base^j$. Since the IFS defining $F$ is assumed to satisfy the strong separation condition, the distance between $x$ and $r$ is comparable to $\base^{-m}$, where $m$ is the largest index for which $\omega_i = \tau_i$ for all $i \leq m$. In fact, a careful analysis shows that $|x - r| \geq \base^{-(m + 2)}$, though the precise constant factor is not relevant. We claim that if $j \geq \ell$, then
\begin{equation}
\label{ETSBAintrinsic}
\base^{-m} \geq \frac{\base^{-\ell}}{\base^j j^{1/\delta}},
\end{equation}
which demonstrates that \eqref{BAdef} holds with $\kappa = \base^{-(\ell + 2)}$. We now separate into two cases:

{\bf Case 1:} $m \leq j + \ell$. In this case, we have
\[
\base^{-m} \geq \base^{-j} \base^{-\ell} \geq \frac{\base^{-\ell}}{\base^j j^{1/\delta}},
\]
as required.

{\bf Case 2:} $m > j + \ell$. In this case, by the $m$th letter, the sequence $\tau$ will have already begun to repeat. The longest repeated string in the sequence $\tau_1\ldots\tau_m$ is $\tau_{i+1}\dots \tau_{m-(j-i)} = \tau_{j+1} \dots \tau_{m}$. Note that although the two sides of this equation represent distinct instances of the same string as a substring of $\tau_1\ldots\tau_m$, the two instances may overlap with each other; this happens if and only if $m > 2j - i$. For the purposes of our calculations, it does not matter whether these two instances overlap or not.

By the definition of $m$, we have $\omega_1\cdots \omega_m = \tau_1\cdots \tau_m$, so $\omega$ also has a repeated string $\omega_{i+1}\dots \omega_{m-(j-i)} = \omega_{j+1} \dots \omega_{m}$ of length $(m - j)$ occurring in the first $m$ letters. On the other hand, by the definition of $\ell$, there exists $m - j - \ell < n \leq m - j$ such that $n \in B_\omega$, which implies that $\omega$ has no repeated string of length $n$ occurring in the first $k^n + n - 1$ letters of $\omega$. Since $n \leq m - j$, it follows that $m > k^n + n - 1$, and thus
\[
k^n \leq m - n < j + \ell \leq 2j.
\]
Since $k \geq 2$ and $n \geq m - j - \ell + 1$, this implies
\[
k^{m - j - \ell} \leq j.
\]
Raising both sides to the power of $1/\delta$ gives
\[
\base^{m - j - \ell} \leq j^{1/\delta},
\]
and rearranging gives \eqref{ETSBAintrinsic}.
\end{proof}

\bibliographystyle{amsplain}

\bibliography{bibliography}

\providecommand{\bysame}{\leavevmode\hbox to3em{\hrulefill}\thinspace}
\providecommand{\MR}{\relax\ifhmode\unskip\space\fi MR }
\providecommand{\MRhref}[2]{%
  \href{http://www.ams.org/mathscinet-getitem?mr=#1}{#2}
}
\providecommand{\href}[2]{#2}
\begin{thebibliography}{10}

\bibitem{Aigner}
Martin Aigner, \emph{A course in enumeration}, Graduate Texts in Mathematics,
  vol. 238, Springer, Berlin, 2007. \MR{2339282}

\bibitem{BecherHeiber}
Ver{\'o}nica Becher and Pablo Heiber, \emph{On extending de {B}ruijn
  sequences}, Inform. Process. Lett. \textbf{111} (2011), no.~18, 930--932.
  \MR{2849850 (2012e:68258)}

\bibitem{BernikDodson}
Vasilii Bernik and Maurice Dodson, \emph{Metric {D}iophantine approximation on
  manifolds}, Cambridge Tracts in Mathematics, vol. 137, Cambridge University
  Press, Cambridge, 1999.

\bibitem{BFKRW}
Ryan Broderick, Lior Fishman, Dmitry Kleinbock, Asaf Reich, and Barak Weiss,
  \emph{The set of badly approximable vectors is strongly {$C^1$}
  incompressible}, Math. Proc. Cambridge Philos. Soc. \textbf{153} (2012),
  no.~02, 319--339.

\bibitem{BFR}
Ryan Broderick, Lior Fishman, and Asaf Reich, \emph{Intrinsic approximation on
  {C}antor-like sets, a problem of {M}ahler}, Mosc. J. Comb. Number Theory
  \textbf{1} (2011), no.~4, 291--300.

\bibitem{Bugeaud}
Yann Bugeaud, \emph{Approximation by algebraic numbers}, Cambridge Tracts in
  Mathematics, vol. 160, Cambridge University Press, Cambridge, 2004.

\bibitem{BugeaudDurand}
Yann Bugeaud and Arnaud Durand, \emph{Metric {D}iophantine approximation on the
  middle-third {C}antor set}, J. Eur. Math. Soc. (JEMS) \textbf{18} (2016),
  no.~6, 1233--1272. \MR{3500835}

\bibitem{EFS}
Manfred Einsiedler, Lior Fishman, and Uri Shapira, \emph{{D}iophantine
  approximations on fractals}, Geom. Funct. Anal. \textbf{21} (2011), no. 1,
  14--35.

\bibitem{Falconer_book}
Kenneth Falconer, \emph{Fractal geometry: {M}athematical foundations and
  applications}, John Wiley \& Sons, Ltd., Chichester, 1990.

\bibitem{Fishman}
Lior Fishman, \emph{{S}chmidt's game on fractals}, Israel J. Math. \textbf{171}
  (2009), no. 1, 77--92.

\bibitem{FishmanSimmons1}
Lior Fishman and David Simmons, \emph{Intrinsic approximation for fractals
  defined by rational iterated function systems - {M}ahler's research
  suggestion}, Proc. Lond. Math. Soc. (3) \textbf{109} (2014), no. 1, 189--212.

\bibitem{FishmanSimmons2}
\bysame, \emph{Extrinsic {D}iophantine approximation on manifolds and
  fractals}, J. Math. Pures Appl. (9) \textbf{104} (2015), no.~1, 83--101.

\bibitem{FishmanSimmons5}
Lior Fishman, David Simmons, and Mariusz Urba{\'n}ski, \emph{{D}iophantine
  approximation in {B}anach spaces}, J. Th{\'e}or. Nombres Bordeaux \textbf{26}
  (2014), no.~2, 363--384.

\bibitem{KLW}
Dmitry Kleinbock, Elon Lindenstrauss, and Barak Weiss, \emph{On fractal
  measures and {D}iophantine approximation}, Selecta Math. \textbf{10} (2004),
  479--523.

\bibitem{KleinbockWeiss1}
Dmitry Kleinbock and Barak Weiss, \emph{Badly approximable vectors on
  fractals}, Israel J. Math. \textbf{149} (2005), 137--170.

\bibitem{Legendre}
Adrien-Marie Legendre, \emph{Essai sur la th\'eorie des nombres ({E}ssay on
  number theory). {R}eprint of the second (1808) edition}, Cambridge Library
  Collection, Cambridge University Press, Cambridge, 2009 (French).

\bibitem{Mahler}
Kurt Mahler, \emph{Some suggestions for further research}, Bull. Aust. Math.
  Soc. \textbf{29} (1984), 101--108.

\bibitem{Weiss}
Barak Weiss, \emph{Almost no points on a {C}antor set are very well
  approximable}, R. Soc. Lond. Proc. Ser. A Math. Phys. Eng. Sci. \textbf{457}
  (2001), no. 2008, 949--952.

\end{thebibliography}

\end{document}